\documentclass[11pt,twoside]{amsart}

\usepackage[latin1]  {inputenc}%
\usepackage[T1]      {fontenc }%
\usepackage          {amsmath }%
\usepackage          {amsfonts}%
\usepackage          {amssymb }%
\usepackage          {amsthm  }%
\usepackage          {a4wide  }%
\usepackage          {url     }%
\usepackage          {tikz    }%
\usepackage[bookmarks=false,pdfborder={0 0 0.05}]{hyperref}
\usepackage[all]{xy}
\usepackage{bm}

\usetikzlibrary{arrows}

\usepackage{enumitem, amsmath, amsfonts, amssymb, amsthm,  wasysym, graphics, graphicx, xcolor, frcursive,comment,bbm}

\usepackage{etex}

\definecolor{darkblue}{rgb}{0.0,0,0.7} 
\definecolor{darkred}{rgb}{0.7,0,0} 

\usepackage{hyperref}
\usepackage[all]{xy}
\usepackage[T1]{fontenc}

\usepackage{array}

\usepackage{MnSymbol}

\usepackage{float}
\restylefloat{table}

\def\defn#1{{\sf #1}}

\newcommand{\edgedir}{\mathbin{\tikz [semithick, baseline=-0.2ex,-latex, ->] \draw [->] (0pt,0.4ex) -- (1em,0.4ex);}} 

\newcommand{\ZZ}{\mathbb Z}

\newcommand{\spanr}{\mathrm{span}_{\mathbb{R}}}

\DeclareMathOperator{\Red}{Red}

\DeclareMathOperator{\GL}{GL}

\DeclareMathOperator{\Z}{\mathbb{Z}}

\DeclareMathOperator{\pac}{Pc}

\newtheorem{theorem}{Theorem}[section]
\newtheorem{corollary}[theorem]{Corollary}
\newtheorem{Proposition}[theorem]{Proposition}
\newtheorem{Lemma}[theorem]{Lemma}

\theoremstyle{definition}

\newtheorem{example}[theorem]{Example}

\newtheorem{question}[theorem]{Question}

\title[The centralizer of a Coxeter element]{The centralizer of a Coxeter element}

\author[R.~Hollenbach]{Ruwen Hollenbach}
\address{Ruwen Hollenbach, Leibniz Universit\"at Hannover, Germany}
\email{hollenbach@math.uni-hannover.de}

\author[P.~Wegener]{Patrick Wegener}
\address{Patrick Wegener, Technische Universit\"at Kaiserslautern, Germany}
\email{wegener@mathematik.uni-kl.de}

\subjclass[2010]{Primary 20F55}

\keywords{Coxeter groups, Coxeter element}

\date{\today}

\begin{document}
\newcolumntype{C}[1]{>{\centering\arraybackslash}m{#1}}

\begin{abstract}
We prove that the centralizer of a Coxeter element in an irreducible Coxeter group is the cyclic group generated by that Coxeter element. 
\end{abstract}

\maketitle

\tableofcontents

\section{Introduction}\label{sec:intro}

A classical result in finite Coxeter groups states that the centralizer of a Coxeter element $c$ is the cyclic group generated by $c$ provided that $W$ is irreducible (see \cite[Proposition 30]{Car72}). In \cite{Blokhina} this was then proved for infinite Coxeter groups of finite rank where the Coxeter diagram is either a simply-laced tree or of affine type. More recently, it has been shown that this result also holds for well-generated complex reflection groups \cite[Theorem 1.9]{Bessis2}.

In this paper we prove that the same statement holds for arbitrary infinite, irreducible Coxeter groups of finite rank.

\begin{theorem} \label{thm:main}
Let $(W,S)$ be an infinite irreducible Coxeter system of finite rank and $c \in W$ a Coxeter element. Then $C_W(c)=\langle c \rangle$.
\end{theorem}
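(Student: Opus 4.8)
My plan is to reduce the statement to the well-studied geometry of the Coxeter element acting on the Tits cone, and in particular to its action on the reflection representation $V$. Fix a Coxeter system $(W,S)$ of finite rank $n$ that is irreducible and infinite, and a Coxeter element $c$. The element $c$ acts on $V = \spanr(\Delta)$, where $\Delta = \{\alpha_s : s \in S\}$ is the set of simple roots, and the first step is to recall the classification of Coxeter elements into the finite, affine, and indefinite cases via the signature of the bilinear form $B$. Since $W$ is infinite and irreducible, $B$ is either positive semidefinite (affine case) or indefinite. The plan is to treat the affine case by the classical argument (reduce to the finite Weyl group modulo translations, where the result of Carter applies, and handle the translation lattice directly using irreducibility), and to concentrate on the indefinite case.

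In the indefinite case the key tool is the spectral structure of $c$ acting on $V$: by the work of A'Campo, Howlett, and others, $c$ has a unique eigenvalue $\lambda > 1$ of largest modulus (the spectral radius), with a one-dimensional eigenspace spanned by a vector $x^+$ lying in the interior of the dual Tits cone, and similarly $\lambda^{-1}$ has eigenline $\spanr(x^-)$. First I would show that any $g \in C_W(c)$ must preserve these eigenlines, hence fixes $x^+$ and $x^-$ up to sign; using that $x^+$ is in the interior of the Tits cone one rules out the sign $-1$, so $g$ fixes $x^+$. Then I would argue that the stabilizer in $W$ of an interior point of the Tits cone is trivial (the action on the interior of the Tits cone is free, since stabilizers of points are generated by the reflections fixing them and interior points lie on no reflection hyperplane), which would force $g = e$ — but this is too strong and cannot be literally right, so the honest version is: $g$ preserves the $2$-plane $P = \spanr(x^+, x^-)$ and acts on the line $\RR x^+$ trivially, whence $g$ acts on $P$ as $\pm \idop$ or as a transvection; combined with $g$ commuting with $c|_P$ which is hyperbolic, $g|_P = \idop$. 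One then studies the action of $g$ on $V/P$ or on the orthogonal complement $P^{\perp_B}$, where $c$ acts with eigenvalues of modulus $1$.

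The heart of the argument is therefore the following: decompose $V$ (or its complexification) into the $c$-invariant plane $P$ and a complement on which $c$ acts with all eigenvalues on the unit circle, show that $C_W(c)$ embeds into the centralizer of $c$ in $\GL(V)$, and then pin down that centralizer. Over $\CC$, the centralizer of $c$ in $\GL(V_\CC)$ is a product of general linear groups indexed by the distinct eigenvalues (if $c$ is semisimple, which holds when the Coxeter number considerations give distinct eigenvalues — in general one must deal with Jordan blocks, which do occur). The plan is to combine this with the crucial discreteness/integrality constraint: $C_W(c)$ is a discrete subgroup, it normalizes $\langle c \rangle$, and every element is a product of reflections whose roots are controlled. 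I would use the existence of the invariant plane together with a ping-pong or limit-set argument: an element $g$ commuting with $c$ conjugates the attracting fixed point $x^+$ of $c$ (as a hyperbolic isometry of the associated hyperbolic-type space when $B$ has signature $(n-1,1)$) to itself, and in higher signature one passes to the relevant limit set. The main obstacle I anticipate is exactly the general indefinite case where $B$ does not have Lorentzian signature and $c$ need not be semisimple: there the "hyperbolic geometry'' intuition is weaker, the unit-circle eigenvalues of $c$ can have nontrivial multiplicity and Jordan blocks, and one must show that no nontrivial element of $W$ can realize the extra automorphisms allowed by the linear-algebra centralizer — this requires genuinely using that elements of $W$ are products of reflections and the combinatorics of reduced words (e.g. via the Brink–Howlett or dominance-order machinery on roots), not just linear algebra. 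Controlling that step, and cleanly separating the affine case, is where the real work lies.
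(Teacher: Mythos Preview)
Your approach is entirely different from the paper's and has a genuine gap at the decisive step. The paper does not touch the spectrum of $c$ on $V$; instead it uses Krammer's \emph{outward roots}. The centralizer $C_W(c)$ acts on $\operatorname{Out}(c)$, and the $n$ orbits $\langle c\rangle\backslash\operatorname{Out}(c)$ are represented by $\beta_i = s_1\cdots s_{i-1}(e_i)$. For $g \in C_W(c)$ one gets $g\beta_i = c^{m_i}\beta_j$, whence $h_i := gc^{-m_i}$ conjugates one maximal standard parabolic $W_{S\setminus\{s_i\}}$ to another; the structure theory of maximal parabolics (they are pairwise non-conjugate and self-normalizing in the infinite irreducible case) forces $h_i \in W_{S\setminus\{s_i\}}$, and a short finite-index argument then shows all $m_i$ coincide with a single $m$, so $gc^{-m} \in \bigcap_i W_{S\setminus\{s_i\}} = \{1\}$.

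Your spectral plan, by contrast, does not close. Two concrete problems. First, the eigenvector $x^+$ for $\lambda > 1$ is isotropic (from $B(cx^+,cx^+)=\lambda^2 B(x^+,x^+)=B(x^+,x^+)$), so in the Lorentzian picture it sits on the light cone, i.e.\ on the \emph{boundary} of the Tits cone, not in $U^\circ$; you therefore cannot invoke freeness of the action on the interior, and your sign argument for $g(x^+)$ needs a different justification. Second, and more importantly, even once you know that $g$ preserves the axis $P=\spanr(x^+,x^-)$, what you obtain is only that $C_W(c)$ is virtually cyclic --- this is exactly Krammer's finite-index result $[C_W(c):\langle c\rangle]<\infty$, which the paper \emph{uses as an input} but must still upgrade. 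Passing from ``virtually $\langle c\rangle$'' to ``equal to $\langle c\rangle$'' is the whole content of the theorem, and your proposal for that step (discreteness, integrality, combinatorics of reduced words) names the difficulty without supplying a mechanism. You are right that in higher signature with non-semisimple $c$ the linear centralizer in $\GL(V)$ is much larger than $\langle c\rangle$, and nothing in the plan explains why its intersection with $W$ collapses; linear algebra alone cannot see this, because the obstruction is genuinely group-theoretic, and the paper's route through maximal parabolic subgroups is precisely what supplies the missing combinatorial input.
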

\noindent
It is worth noting that our proof does not depend on the results in \cite{Blokhina}. The primary tool we will use in the proof of this theorem are the outward roots introduced by Krammer in \cite{Daan1}

\medskip

The structure of the paper is as follows. In Section 2 we first recall some basic definitions and facts about Coxeter groups. We then state some results about essential and straight elements as well as maximal proper parabolic subgroups. Afterwards, in Section \ref{sec:Outward}, we study the set of outward roots for straight elements. All of these results will be crucial in our proof of Theorem \ref{thm:main}. This proof is finally carried out in Section \ref{sec:Proof}. In Section \ref{sec:Artin} we give a short outlook on a possible generalization of our main theorem to Artin groups.

\medskip
\textbf{Acknowledgments.} The authors thank Gunter Malle and Sophiane Yahiatene for helpful comments and corrections. They also thank Theodosios Douvropoulos and Jon McCammond for helpful comments regarding Section \ref{sec:Artin}.

\section{Coxeter groups}

\subsection{Generalities}
In this subsection we state some well known definitions and properties for Coxeter groups. For details and proofs we refer to \cite{Hu90}.

Recall that a \defn{Coxeter group} is a group $W$ given by a presentation
$$
W = \langle  S \mid (st)^{m_{st}} = 1 ~\forall s,t \in S \rangle,
$$
where $(m_{st})_{s,t \in S}$ is a symmetric $(|S| \times |S|)$-matrix with entries in $\ZZ_{\geq 1} \cup \{ \infty \}$. These entries have to satisfy $m_{ss}=1$ for all $s \in S$ and $m_{st} \geq 2$ for all $s \neq t$ in $S$. If $m_{st} = \infty$, then there is no relation for $st$ in the above presentation. The pair $(W,S)$ is called a \defn{Coxeter system} and $|S|$ is called the \defn{rank} of $(W,S)$. Further, if $|W|$ is finite the system is called \defn{finite} and otherwise it is called \defn{inifinite}. We assume all Coxeter systems in this paper to be of finite rank.

To each Coxeter system $(W,S)$ there is an associated labeled graph, called \defn{Coxeter diagram} and denoted by $\Gamma(W,S)$. Its vertex set is given by $S$ and there is an edge between distinct $s, t \in S$ labeled by $m_{st}$ if $m_{st} > 2$. The Coxeter system $(W,S)$ is called \defn{irreducible} if $\Gamma(W,S)$ is connected. 

Each $w \in W$ can be written as a product $w = s_{1} \cdots s_{k}$ with $s_{i} \in S$. The \defn{length} $\ell(w) = \ell_S(w)$ is defined to be the smallest $k$ for which such an expression exists. The expression $w = s_{1} \cdots s_{k}$ is called \defn{reduced} if $k = \ell(w)$.

\medskip
Let $(W,S)$ be a Coxeter system and let $V$ be a vector space over $\mathbb{R}$ with a basis $\Delta = \{e_s \, \mid \, s \in S\}$. We equip $V$ with a symmetric bilinear form $B$ by setting
\[ B(e_{s}, e_{t})= - \cos \frac{\pi}{m_{st}}\]
for all $s, t \in S$. This term is understood to be $-1$ if $m_{st}= \infty$. The group $W$ can be embedded into $\operatorname{GL}(V)$ via its natural representation (or Tits representation) $\sigma: W \to \GL(V)$ that sends $s \in S$ to the reflection
\[ \sigma_s: V \rightarrow V, ~v \mapsto v- 2 B(e_s, v) e_s. \]
We set $w(e_s) := \sigma(w)(e_s)$ and
$$
\Phi = \Phi(W,S):= \{ w(e_s) \mid w \in W,~s \in S \}.
$$
The set $\Phi$ is called the \defn{root system} for $(W,S)$ and we refer to $\Delta$ as the \defn{simple system} for $\Phi$. We call a root $\alpha= \sum_{s \in S} a_s e_s$
\defn{positive} and write $\alpha>0$ if $a_s \geq 0$ for all $s \in S$ and \defn{negative} if $a_s \leq 0$ for all $s \in S$. Let $\Phi^+$ be the set consisting of the positive roots. It turns out that
$\Phi$ decomposes into positive and negative roots, i.e. $\Phi= \Phi^+ \dot{\cup} -\Phi^+$. 

If $\alpha = w(e_s) \in \Phi$ for some $w \in W$ and $s \in S$, then $wsw^{-1}$ acts as a reflection on $V$. It sends $\alpha$ to $-\alpha$ and fixes pointwise the hyperplane orthogonal to $\alpha$. We set $s_{\alpha} = wsw^{-1}$ and call $T= \{ wsw^{-1}\mid w \in W, ~s \in S\}$ the set of \defn{reflections} for $(W,S)$.\\

The natural representation of $W$ yields a dual action of $W$ on $V^*$ defined by 
\begin{align} \label{equ:DualAction}
(wx)(v)=x(w^{-1}v) ~\text{for }w \in W, v\in V, x\in V^*.
\end{align}
\noindent
Let $C= \{x \in V^*  \mid  x(e_s) >0 ~\text{for all } s \in S\}$. Recall that the \defn{Tits cone} is defined as $U= \bigcup_{w \in W} w \overline{C}$ where $\overline{C}$ denotes the topological closure of $C$ in $V^*$. We denote the topological interior of $U$ in $V^*$ by $U^\circ$. \\

For each subset $I \subseteq S$ the subgroup $W_I = \langle I \rangle$ is called a \defn{standard parabolic subgroup} of $W$. A subgroup of the form $w W_I w^{-1}$ for some $w \in W$ and $I \subseteq S$ is called a \defn{parabolic subgroup}. Note that if $wW_I w^{-1}$ is a parabolic subgroup, then $(wW_I w^{-1}, w I w^{-1})$ is itself a Coxeter system. Furthermore, we say that $w I w^{-1}$ is of \defn{spherical type} if $w W_I w^{-1}$ is a finite Coxeter group. In this case we also call $wW_Iw^{-1}$ \defn{spherical}. If $I \subseteq S$ then we call 
$$
\Phi_I := \{ w(e_s) \mid w \in W_I,~s \in I \}
$$
the root system associated to $(W_I, I)$. The corresponding simple system is $\Delta_I := \{ e_s \mid s \in I \}$. Note that $\Phi_I = \Phi \cap \spanr (\Delta_I)$ \cite[Lemma 3.1]{Qi}.\\

Let $(W,S)$ be a Coxeter system with $S=\{ s_1, \ldots , s_n \}$. Recall that an element of the form $c = s_{\pi(1)} \cdots s_{\pi(n)}$ is called a \defn{standard Coxeter element} where $\pi$ is any permutation of $\{ 1, \ldots , n\}$. Any conjugate of a standard Coxeter element in $W$ is called a \defn{Coxeter element}. Moreover, an element is called a \defn{(standard) parabolic Coxeter element} if it is a (standard) Coxeter element in a parabolic subgroup.

\subsection{Reduced reflection factorizations and parabolic subgroups}

Let $(W,S)$ be a Coxeter system with set of reflections $T$. Since $S \subseteq T$, each $w \in W$ is a product of elements in $T$. We define 
$$
\ell_T(w):= \min \{ k \in \ZZ_{\geq 0} \mid w=t_1 \cdots t_k, ~t_i \in T \}
$$
and call $\ell_T(w)$ the \defn{reflection length} of $w$. If $w= t_1 \cdots t_k$ with $t_i \in T$, we call $(t_1, \ldots , t_k)$ a \defn{reflection factorization} for $w$. If in addition $k=\ell_T(w)$, we say that $(t_1, \ldots , t_k)$ is \defn{reduced}. We denote by $\Red_T(w)$ the set of all reduced reflection factorizations for $w$.

\begin{theorem}[{\cite[Theorem 1.4]{BDSW14}}] \label{thm:RedRef}
Let $(W,S)$ be a Coxeter system with set of reflections $T$ and let $W'$ be a parabolic subgroup of $W$. Then $T'= T \cap W'$ is the set of reflections for $W'$ and for each $w \in W'$ we have $\Red_T(w) = \Red_{T'}(w)$.
\end{theorem}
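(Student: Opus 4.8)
The plan is to prove the two assertions separately. For the first, I would show $T' = T \cap W'$ is the set of reflections for the parabolic subgroup $W' = wW_Iw^{-1}$. Recall from the excerpt that $(wW_Iw^{-1}, wIw^{-1})$ is itself a Coxeter system, so its set of reflections is by definition $\{u s u^{-1} \mid u \in wW_Iw^{-1}, s \in wIw^{-1}\}$. Conjugating by $w^{-1}$ reduces this to the statement that the set of reflections of the standard parabolic $W_I$ equals $T \cap W_I$, where $T$ is the set of reflections of $W$. The inclusion $\subseteq$ is immediate since reflections of $W_I$ are in particular reflections of $W$ (as $I \subseteq S$). For the reverse inclusion, suppose $t \in T \cap W_I$; then $t = s_\alpha$ for some root $\alpha \in \Phi$, and since $t \in W_I$ one argues that the corresponding hyperplane meets the interior of the Tits cone of $W_I$, forcing $\alpha \in \spanr(\Delta_I)$, hence $\alpha \in \Phi_I$ by the cited fact $\Phi_I = \Phi \cap \spanr(\Delta_I)$, so $t$ is a reflection of $W_I$. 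This is essentially standard Coxeter-theoretic bookkeeping.

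For the second assertion, namely $\Red_T(w) = \Red_{T'}(w)$ for $w \in W'$, the inclusion $\Red_{T'}(w) \subseteq \Red_T(w)$ will be easy: any factorization of $w$ into reflections of $W'$ is a factorization into reflections of $W$, and if it has length $\ell_{T'}(w)$ we need only know it still has minimal length among $T$-factorizations, i.e. $\ell_T(w) = \ell_{T'}(w)$. The nontrivial containment is $\Red_T(w) \subseteq \Red_{T'}(w)$: given a reduced $T$-factorization $(t_1, \dots, t_k)$ of $w$, I must show each $t_i$ actually lies in $W'$. The idea is that the parabolic closure of $w$ — the smallest parabolic subgroup containing $w$ — is contained in $W'$, and a result on reflection factorizations (due to the structure of noncrossing-type arguments, or directly the theorem being cited) shows every reflection appearing in any reflection factorization of $w$ lies in the parabolic closure of $w$. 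Combined with the first part this gives $t_i \in T \cap W' = T'$ and simultaneously $\ell_T(w) = \ell_{T'}(w)$.

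I expect the main obstacle to be precisely the claim that reflections in a reduced $T$-factorization of $w$ lie in a parabolic subgroup as soon as $w$ does — controlling which reflections can appear in minimal-length factorizations. This is the genuinely Coxeter-theoretic input and is where one must invoke the heavy machinery (the parabolic closure being well-defined for arbitrary Coxeter groups, and its interaction with reflection length); the rest is linear-algebraic translation between standard and general parabolics via conjugation. Since the statement is quoted verbatim from \cite{BDSW14}, in the paper itself one would simply cite it, but the sketch above indicates how the proof there proceeds.
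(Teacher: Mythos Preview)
The paper does not prove this statement at all: it is quoted verbatim as \cite[Theorem~1.4]{BDSW14} and used as a black box, exactly as you yourself observe in your final sentence. There is therefore no proof in the paper to compare your sketch against.

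One caution about the sketch itself: for the nontrivial inclusion $\Red_T(w)\subseteq\Red_{T'}(w)$ you invoke the principle that every reflection appearing in a reduced $T$-factorization of $w$ lies in the parabolic closure $\pac(\{w\})$. In the present paper that principle is stated and proved \emph{after} Theorem~\ref{thm:RedRef}, and its proof explicitly uses Theorem~\ref{thm:RedRef}. So within the logical structure of this paper your argument would be circular. The actual proof in \cite{BDSW14} does not go through parabolic closures in this way; it argues more directly with roots and the geometry of the reflection representation to show that a reduced $T$-factorization of an element of $W_I$ already consists of reflections in $W_I$. Your sketch correctly isolates this as the substantive step but does not supply it, and the route you gesture at is not available without assuming the conclusion.
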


\begin{theorem}\label{thm:generation}
Let $(W,S)$ be a Coxeter system with set of reflections $T$, $W'$ a parabolic subgroup of $W$ and $w \in W'$ a Coxeter element (that is, $w$ is a parabolic Coxeter element of $W$). If $(t_1, \ldots , t_k), ~(r_1, \ldots , r_k) \in \Red_{W' \cap T}(w)$, then 
$$
W' = \langle t_1, \ldots , t_k \rangle = \langle r_1, \ldots , r_k \rangle.
$$
\end{theorem}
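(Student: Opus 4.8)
The plan is to reduce the statement, via Theorem~\ref{thm:RedRef} together with a single conjugation, to the situation where $W'=W$ and $w$ is a \emph{standard} Coxeter element, and then to play off the transitivity of the Hurwitz action against the elementary fact that a Hurwitz move does not change the subgroup generated by a reflection factorization. For the first reduction, note that by Theorem~\ref{thm:RedRef} the set $T':=W'\cap T$ is the set of reflections of $W'$ and $\Red_{W'\cap T}(x)=\Red_{T'}(x)$ for all $x\in W'$; since $(W',T')$ is again a Coxeter system of finite rank in which $w$ is a Coxeter element, we may assume from now on that $W'=W$. For the second reduction, write $w=vcv^{-1}$ with $v\in W$ and $c=s_{\pi(1)}\cdots s_{\pi(n)}$ a standard Coxeter element, where $S=\{s_1,\dots,s_n\}$. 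Conjugation by $v$ carries $\Red_T(w)$ bijectively onto $\Red_T(c)$ via $(t_1,\dots,t_k)\mapsto(v^{-1}t_1v,\dots,v^{-1}t_kv)$, and sends $\langle t_1,\dots,t_k\rangle$ to $v^{-1}\langle t_1,\dots,t_k\rangle v$; as $v^{-1}Wv=W$, it therefore suffices to show that every reduced reflection factorization of the standard Coxeter element $c$ generates $W$.

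To prove this, I would invoke the Hurwitz action of the $k$-strand braid group on $\Red_T(c)$, whose $i$-th standard generator sends $(t_1,\dots,t_k)$ to $(t_1,\dots,t_{i-1},\,t_it_{i+1}t_i,\,t_i,\,t_{i+2},\dots,t_k)$ (and whose inverse performs the reverse move). The key observation is that such a move does not change the generated subgroup, since $\{t_it_{i+1}t_i,\,t_i\}$ and $\{t_i,\,t_{i+1}\}$ generate the same subgroup of $W$; hence $(t_1,\dots,t_k)\mapsto\langle t_1,\dots,t_k\rangle$ is constant on each Hurwitz orbit in $\Red_T(c)$. Since the Hurwitz action on the set of reduced reflection factorizations of a standard Coxeter element is transitive by \cite{BDSW14}, this subgroup is one and the same for all of $\Red_T(c)$.

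It remains to exhibit a single element of $\Red_T(c)$ that generates $W$. Recall that $\ell_T(c)=n$ for a standard Coxeter element; consequently $k=n$ and the tuple $(s_{\pi(1)},\dots,s_{\pi(n)})$ itself lies in $\Red_T(c)$, and it plainly generates $\langle s_1,\dots,s_n\rangle=W$. Combined with the previous paragraph, this proves that every element of $\Red_T(c)$ generates $W$; undoing the two reductions then gives $\langle t_1,\dots,t_k\rangle=W'$ for every $(t_1,\dots,t_k)\in\Red_{W'\cap T}(w)$, which is the assertion of the theorem. The only substantial ingredient here is the transitivity of the Hurwitz action on reduced reflection factorizations of Coxeter elements; the rest is bookkeeping. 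The one point that needs a little care (though it is standard) is the equality $\ell_T(c)=n$ in the infinite setting, where the naive estimate $\ell_T(c)\ge\dim\operatorname{im}(\sigma(c)-\idop)$ need not be sharp --- for instance in affine type, where $c$ can have a nonzero fixed vector.
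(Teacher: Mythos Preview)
Your proof is correct and follows essentially the same line as the paper's: both rest on the transitivity of the Hurwitz action on $\Red_T$ of a (parabolic) Coxeter element from \cite[Theorem 1.3]{BDSW14}, together with the observation that Hurwitz moves preserve the generated subgroup, and the existence of one reduced factorization by simple reflections that visibly generates $W'$. The only cosmetic difference is that the paper skips your two reduction steps and works directly with $W'=xW_Ix^{-1}$, taking $(xs_1x^{-1},\ldots,xs_kx^{-1})$ as the distinguished reduced factorization; your caveat about $\ell_T(c)=n$ is exactly the implicit input here, and the paper later cites \cite[Lemma 2.1]{BDSW14} for it. (One small wording slip: $(W',T')$ is not a Coxeter system; you mean $(W',S')$ for a suitable simple system $S'$ with reflection set $T'$.)
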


\begin{proof}
Since $W'$ is a parabolic subgroup, there exist $I= \{s_1, \ldots , s_k\} \subseteq S$ and $x \in W$ such that $W' = x W_I x^{-1}$. After possible renumbering we can assume that $w = s_1^x \cdots s_k^x$, where $s_i^x:=x s_i x^{-1}$. In particular we have $( s_1^x ,\ldots, s_k^x) \in \Red_{W' \cap T}(w)$ and $W' = \langle s_1^x ,\ldots, s_k^x \rangle$. Now, if $(t_1, \ldots , t_k) \in \Red_{W' \cap T}(w)$, then $( s_1^x ,\ldots, s_k^x)$ and $(t_1, \ldots , t_k)$ lie in the same orbit under the Hurwitz action by \cite[Theorem 1.3]{BDSW14}. It is easy to see from its definition that the Hurwitz action preserves the generated group, that is $\langle s_1^x ,\ldots, s_k^x \rangle = \langle t_1 ,\ldots, t_k \rangle$.
\end{proof}

\subsection{Essential and straight elements}
Coxeter elements are both essential and straight. These properties will be crucial for our proof of the main theorem. We recall the definitions and state the necessary properties in this subsection.

Let $(W,S)$ be a Coxeter system. An element $w \in W$ is called \defn{essential} if it does not lie in any proper parabolic subgroup. 

\begin{Proposition} \label{prop:EssentialInfinite}
Let $(W,S)$ be an infinite Coxeter system. Then every essential element in $W$ has infinite order.
\end{Proposition}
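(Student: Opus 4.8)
The plan is to establish the contrapositive: every element of $W$ of finite order is contained in a proper parabolic subgroup, and hence is not essential. So assume $w \in W$ satisfies $w^n = 1$ for some $n \geq 1$, and set $G := \langle w \rangle$, a finite subgroup of $W$.

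The key point is the classical fact that any finite subgroup of a Coxeter group lies in a spherical parabolic subgroup; I would recall the usual proof via the Tits cone. Since $(W,S)$ has finite rank, $C \subseteq V^*$ is a nonempty open set contained in $U$, so the interior $U^\circ$ of the Tits cone is nonempty; moreover $U$ is convex, hence so is $U^\circ$, and $U^\circ$ is $W$-stable because $W$ acts on $V^*$ by linear homeomorphisms preserving $U$. Picking any $p_0 \in U^\circ$, the averaged point $p := \tfrac{1}{|G|} \sum_{g \in G} g p_0$ then lies in $U^\circ$ and is fixed by $G$. Writing $p = x q$ with $x \in W$ and $q \in \overline{C}$, the group $x^{-1} G x$ fixes $q$. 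Now the stabilizer in $W$ of a point $q \in \overline{C}$ is the standard parabolic subgroup $W_{I(q)}$ with $I(q) = \{ s \in S \mid q(e_s) = 0 \}$, and since $q \in U^\circ$ this stabilizer is finite, i.e.\ $I(q)$ is of spherical type (see, e.g., \cite[Section 5.13]{Hu90}). Consequently $G \subseteq x W_{I(q)} x^{-1}$, a spherical parabolic subgroup of $W$.

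Finally, because $W$ is infinite while $x W_{I(q)} x^{-1}$ is finite, the latter is a proper subgroup of $W$. Hence $w \in G \subseteq x W_{I(q)} x^{-1}$ lies in a proper parabolic subgroup, so $w$ is not essential; the contrapositive is exactly the assertion of the proposition.

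The one nontrivial input is the package of standard facts about the Tits cone used above — convexity of $U$, that stabilizers of points of $U$ are parabolic subgroups, and that such a stabilizer is finite precisely when the point lies in $U^\circ$. These can simply be quoted; alternatively, one may cite directly the well-known statement that every finite subgroup of a Coxeter group is conjugate into a finite standard parabolic subgroup and proceed immediately to the last paragraph. I do not anticipate any genuine obstacle beyond locating the cleanest citation.
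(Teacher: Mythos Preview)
Your proof is correct and follows the same route as the paper: both argue via the classical fact (due to Tits) that every finite subgroup of a Coxeter group lies in a spherical parabolic subgroup, which is proper since $W$ is infinite. The paper simply cites this result (\cite[Corollary D.2.9]{Davis}), whereas you sketch its standard Tits-cone proof; as you yourself note at the end, citing the statement directly would suffice.
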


\begin{proof}
This follows directly from a result of Tits, stating that each finite subgroup of $W$ is contained in a spherical parabolic subgroup \cite[Corollary D.2.9]{Davis}.
\end{proof}

\begin{Proposition}[{\cite[Corollary 2.5]{Paris}}] \label{prop:PowersEssential}
Let $(W,S)$ be irreducible and infinite, $w \in W$ and $p \in \ZZ_{>0}$. Then $w$ is essential if and only if $w^p$ is essential.
\end{Proposition}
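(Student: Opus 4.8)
The statement to prove is Proposition~\ref{prop:PowersEssential}: for $(W,S)$ irreducible and infinite, $w \in W$, and $p \in \ZZ_{>0}$, the element $w$ is essential if and only if $w^p$ is essential. Although this is attributed in the excerpt to \cite[Corollary 2.5]{Paris}, here is how I would reconstruct the argument. One direction is immediate: if $w^p$ is essential then $w$ is essential, because if $w$ were contained in a proper parabolic subgroup $P$, then so would be every power of $w$, in particular $w^p$, contradicting essentiality of $w^p$. So the content is the forward direction: $w$ essential implies $w^p$ essential.

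\medskip

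For the forward direction, the natural strategy is to argue by contradiction using the notion of the \emph{parabolic closure}. Suppose $w$ is essential but $w^p$ is not; then $w^p$ lies in some proper parabolic subgroup, and hence in a minimal one, call it $P$, which is the parabolic closure of $w^p$. The key structural fact I would invoke is that the parabolic closure is canonical and that conjugation by $w$ permutes parabolic subgroups: since $w$ normalizes $\langle w^p \rangle$, the element $w$ conjugates the parabolic closure $P$ of $w^p$ to the parabolic closure of $w w^p w^{-1} = w^p$, i.e. $w P w^{-1} = P$. Thus $w$ normalizes the proper parabolic subgroup $P$. Now I would use the fact (standard for Coxeter groups of finite rank, and the kind of fact that is available at this point in such a paper) that the normalizer of a parabolic subgroup $P = x W_I x^{-1}$ decomposes as $N_W(P) = P \rtimes N_I$, where $N_I$ is a complement related to diagram automorphisms of $I$ stabilizing the ``type'' data; the essential point being that $N_W(P)$ itself is contained in a proper parabolic subgroup or at least that $w \in N_W(P)$ forces $w$ into a proper parabolic. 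More carefully: since $P$ is proper and $W$ is irreducible, $P \neq W$, and one shows $N_W(P)$ is contained in the parabolic subgroup $\langle P, \text{(finitely many reflections fixing } \spanr(\text{the relevant subspace}))\rangle$, which is still proper. Hence $w$ lies in a proper parabolic subgroup, contradicting that $w$ is essential.

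\medskip

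The main obstacle is making precise the claim that $N_W(P)$ sits inside a proper parabolic subgroup when $P$ is a proper parabolic in an irreducible Coxeter group of finite rank. This is where irreducibility of $W$ is genuinely used (in a reducible group the normalizer of a parabolic could be huge, e.g. a direct factor), and it is where one needs either the explicit structure theory of normalizers of parabolics (à la Howlett, Borcherds, or Brink--Howlett) or a more hands-on geometric argument with the Tits cone: the parabolic subgroup $P$ is the stabilizer of a face of the Tits cone (or of a suitable subspace of $V$), and any element normalizing $P$ must stabilize the ``span'' of that face, hence fixes setwise a proper subspace spanned by a subset of roots, landing it in a proper parabolic. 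An alternative route that sidesteps normalizer structure: use that $w$ essential in an infinite irreducible group has infinite order (Proposition~\ref{prop:EssentialInfinite}) and acts with no nonzero fixed vector on $V$ (or on the ``essential'' part), whereas lying in a proper parabolic forces the existence of an invariant proper subspace; then show $w^p$ having such an invariant proper subspace would propagate back to $w$ via a cyclicity/eigenspace argument over $\RR$ or $\CC$ — but the subspace produced by ``lies in $W_I$'' is not automatically $w$-invariant, so the parabolic-closure argument above is cleaner. I would therefore commit to the parabolic-closure-plus-normalizer approach and cite or prove the needed fact that the normalizer of a proper parabolic in an irreducible finite-rank Coxeter group is itself contained in a proper parabolic.
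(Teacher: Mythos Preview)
The paper does not give its own proof of this proposition; it is simply quoted from \cite[Corollary 2.5]{Paris}. So there is no argument in the paper to compare against, and your task was really to reconstruct Paris's proof.

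Your reduction is correct up to the point where you obtain $w\in N_W(P)$ for $P=\pac(\{w^p\})$. The gap is the step after that. The assertion you isolate as the ``main obstacle''---that $N_W(P)$ is contained in a proper parabolic subgroup whenever $P$ is a proper parabolic in an infinite irreducible Coxeter group---is \emph{false}. In $\widetilde{A}_2$ with simple reflections $s_0,s_1,s_2$, take $P=\langle s_0\rangle$. Every proper parabolic subgroup of $\widetilde{A}_2$ is finite, but $N_W(P)=C_W(s_0)$ contains the translation $t_{\alpha_1-\alpha_2}$ and is therefore infinite; hence $N_W(P)$ lies in no proper parabolic. Your weaker fallback, that merely $w\in N_W(P)$ should force $w$ into a proper parabolic, fails for the same reason: the glide reflection $w=s_0\,t_{\alpha_1-\alpha_2}\in N_W(P)$ has infinite order and is therefore essential in $\widetilde{A}_2$. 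If you now add back the hypothesis $w^p\in P$ (which rules out this particular $w$), the resulting claim ``$w\in N_W(P)$ and $w^p\in P$ imply $w$ lies in a proper parabolic'' is precisely the contrapositive of the forward implication you are trying to prove, so no progress has been made.

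What Paris actually does is different: rather than working through $N_W(P)$, he gives an intrinsic description of $\pac(\{w\})$ (via the geometry of the Tits cone and the axis/limit behaviour of $\langle w\rangle$ for $w$ of infinite order) that is visibly invariant under replacing $w$ by $w^p$. In the language used later in the present paper, one way to phrase this is that the set $\operatorname{Out}(w)$ of outward roots depends only on the asymptotics of the $\langle w\rangle$-orbit, so $\operatorname{Out}(w)=\operatorname{Out}(w^p)$ for $p>0$, and the parabolic closure is recovered from this data. That is the missing idea your normalizer approach does not supply.
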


\medskip
For a subset $X \subseteq W$ we define $\pac(X)$ to be the \defn{parabolic closure} of $X$, that is, $\pac(X)$ is the smallest parabolic subgroup of $W$ containing $X$. This is well defined since parabolic subgroups are closed under taking intersections \cite[Proposition 1.1]{Qi}.

It is known that Coxeter elements (and therefore their powers) are essential \cite[Theorem 3.1]{Paris}. We will extend this fact to so-called (weak) quasi-Coxeter elements. An element $w \in W$ is called a \defn{weak quasi-Coxeter element} (resp. a \defn{quasi-Coxeter element}) if there exists a reduced reflection factorization $(t_1, \ldots, t_m) \in \Red_T(w)$ such that $W= \pac(\{ t_1, \ldots , t_m\})$ (resp. such that $W = \langle t_1, \ldots ,t_m \rangle$). Obviously, every Coxeter element is a quasi-Coxeter element.

\begin{Proposition}
Let $w \in W$ and $(t_1, \ldots , t_m) \in \Red_T(w)$. Then 
$$
\pac(\{ w \}) = \pac(\{ t_1, \ldots , t_m\}).
$$
\end{Proposition}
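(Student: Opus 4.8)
The plan is to establish the two inclusions $\pac(\{t_1,\ldots,t_m\}) \subseteq \pac(\{w\})$ and $\pac(\{w\}) \subseteq \pac(\{t_1,\ldots,t_m\})$ separately, the first being the substantial one.

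For the easy inclusion, observe that $w = t_1 \cdots t_m$ lies in any subgroup containing $t_1, \ldots, t_m$; in particular $w \in \pac(\{t_1,\ldots,t_m\})$, and since the latter is a parabolic subgroup containing $w$, minimality of $\pac(\{w\})$ gives $\pac(\{w\}) \subseteq \pac(\{t_1,\ldots,t_m\})$.

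For the reverse inclusion, set $W' := \pac(\{w\})$. This is a parabolic subgroup containing $w$, so by Theorem~\ref{thm:RedRef} its reflection set is $T' = T \cap W'$ and $\Red_T(w) = \Red_{T'}(w)$. Hence the given factorization $(t_1,\ldots,t_m)$ is in fact a reduced reflection factorization of $w$ inside $W'$, so each $t_i \in W'$, and therefore $\pac(\{t_1,\ldots,t_m\}) \subseteq W' = \pac(\{w\})$. I expect this to be the crux of the argument: the point is precisely that Theorem~\ref{thm:RedRef} forces every reflection appearing in \emph{any} reduced reflection factorization of $w$ to already lie in the parabolic closure of $w$, so no reduced factorization can ``escape'' $\pac(\{w\})$.

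Combining the two inclusions yields $\pac(\{w\}) = \pac(\{t_1,\ldots,t_m\})$. The main obstacle is ensuring that Theorem~\ref{thm:RedRef} genuinely applies — i.e. that $\pac(\{w\})$ is a parabolic subgroup (true by definition, using that parabolics are closed under intersection) and that $w$ lies in it — after which the conclusion is immediate. No computation is needed beyond invoking that theorem.
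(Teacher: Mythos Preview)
Your proposal is correct and follows essentially the same approach as the paper: both inclusions are proved exactly as you do, with Theorem~\ref{thm:RedRef} supplying the key fact that every $t_i$ already lies in $\pac(\{w\})$. Your argument for the easy inclusion is in fact slightly more direct than the paper's (which routes through the intersection $\pac(\{w\}) \cap \pac(\{t_1,\ldots,t_m\})$), but the substance is identical.
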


\begin{proof}
First observe that $t_1, \ldots , t_m \in \pac(\{ t_1, \ldots , t_m\})$, thus $w \in \pac(\{ w \}) \cap \pac(\{ t_1, \ldots , t_m\})$. This intersection is again a parabolic subgroup by \cite[Proposition 1.1]{Qi}. Since $\pac(\{ w \})$ is the minimal parabolic subgroup containing $w$, we conclude $\pac(\{ w \}) \subseteq \pac(\{ t_1, \ldots , t_m\})$.

It remains to show that $\pac(\{ t_1, \ldots , t_m\}) \subseteq \pac(\{ w \})$. Since $\pac(\{ w \})$ is a parabolic subgroup, its set of reflections is given by $T':= T \cap \pac(\{ w \})$. By Theorem \ref{thm:RedRef} we have $\Red_T(w) = \Red_{T'}(w)$, hence $t_1, \ldots , t_m \in T' \subseteq \pac(\{ w \})$. Therefore we have $\pac(\{ t_1, \ldots , t_m\}) \subseteq \pac(\{ w \})$.
\end{proof}

As a direct consequence of this proposition we obtain the following.
\begin{Proposition} \label{prop:QuasiIsEssential}
Weak quasi-Coxeter elements are essential.
\end{Proposition}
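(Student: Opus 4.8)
The plan is to read this off directly from the preceding proposition together with the definitions involved. Suppose $w \in W$ is a weak quasi-Coxeter element, so by definition there is a reduced reflection factorization $(t_1, \ldots, t_m) \in \Red_T(w)$ with $W = \pac(\{t_1, \ldots, t_m\})$. Applying the previous proposition to this factorization gives $\pac(\{w\}) = \pac(\{t_1, \ldots, t_m\})$, and hence $\pac(\{w\}) = W$.

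It then remains to translate $\pac(\{w\}) = W$ into essentiality. Recall that $\pac(\{w\})$ is by definition the smallest parabolic subgroup of $W$ containing $w$ (well defined since parabolic subgroups are closed under intersection, \cite[Proposition 1.1]{Qi}). If $w$ were contained in some proper parabolic subgroup $W'$ of $W$, then minimality of $\pac(\{w\})$ would force $\pac(\{w\}) \subseteq W' \neq W$, contradicting $\pac(\{w\}) = W$. Therefore $w$ lies in no proper parabolic subgroup, i.e.\ $w$ is essential.

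I do not expect any genuine obstacle here: the substance has already been isolated in the preceding proposition (which in turn rests on Theorem \ref{thm:RedRef}), and the present statement is merely the bookkeeping step of combining that equality of parabolic closures with the definition of \emph{essential}. The only point requiring any care is the well-definedness of the parabolic closure, which has already been recorded.
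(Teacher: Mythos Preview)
Your argument is correct and is exactly the intended one: the paper records this proposition as ``a direct consequence'' of the preceding proposition, and your proof spells out precisely that consequence. There is nothing to add.
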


\medskip

An element $w \in W$ is called \defn{straight} if $\ell(w^m) = |m| \ell(w)$ for all $ m \in \ZZ$. 

\begin{theorem}[{Speyer, \cite[Theorem 1]{Speyer}}] \label{thm:Speyer}
Let $(W,S)$ be infinite and irreducible. Then standard Coxeter elements in $W$ are straight.
\end{theorem}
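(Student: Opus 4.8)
\emph{Proof proposal.} The plan is to show $\ell(c^m)=|m|\,n$ for all $m\in\ZZ$, where $c=s_1\cdots s_n$ and $n=|S|$. The word $c^m=(s_1\cdots s_n)^m$ gives $\ell(c^m)\le |m|\,n$ for free and $\ell(c^{-m})=\ell(c^m)$, so it is enough to prove $\ell(c^m)\ge mn$ for $m\ge 1$. Put $\beta_i:=s_1\cdots s_{i-1}(e_{s_i})$ for $1\le i\le n$. An immediate induction gives $\beta_i=e_{s_i}+\sum_{j<i}a_{ij}e_{s_j}$ with all $a_{ij}\ge 0$, so each $\beta_i\in\Phi^+$; hence $s_1\cdots s_n$ is reduced, $\ell(c)=n$, and $\{\beta_1,\dots,\beta_n\}$ is exactly the inversion set $N(c^{-1}):=\{\alpha\in\Phi^+\mid c^{-1}(\alpha)\in-\Phi^+\}$. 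Using the standard criterion that a word $r_1\cdots r_\ell$ is reduced if and only if $r_1\cdots r_{j-1}(e_{r_j})\in\Phi^+$ for every $j$, the word $(s_1\cdots s_n)^m$ is reduced precisely when $c^p(\beta_i)\in\Phi^+$ for all $0\le p\le m-1$ and all $i$. The cases $p=0$ hold by the above, so the whole theorem reduces to the single claim
\[
c^p(\alpha)\in\Phi^+\qquad\text{for every }\alpha\in N(c^{-1})\text{ and every }p\ge 1;
\]
that is, the forward $c$-orbit of any root inverted by $c^{-1}$ stays positive.

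To establish this I would use the classical spectral picture of the Coxeter transformation $\sigma(c)$ acting on $(V,B)$ (A'Campo; Howlett; cf.\ Krammer \cite{Daan1}): since $(W,S)$ is irreducible and infinite, $\sigma(c)$ has a dominant real eigenvalue $\lambda\ge 1$ together with the companion eigenvalue $\lambda^{-1}$, with eigen-data controlled by the positive cone $\{\sum_s a_se_s\mid a_s\ge 0\}$. The goal is to produce a linear form $\varphi\in V^*$ with $\varphi(e_s)\ge 0$ for all $s\in S$, with $\varphi(\alpha)>0$ for $\alpha\in N(c^{-1})$ (automatic if $\varphi\in C$), and with $\varphi\circ\sigma(c)=\mu\varphi$ for some $\mu>0$. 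Given such a $\varphi$ we get $\varphi(c^p\alpha)=\mu^p\varphi(\alpha)>0$ for all $p\ge 0$; since every negative root $\gamma$ satisfies $\varphi(\gamma)\le 0$ and $\Phi=\Phi^+\,\dot{\cup}\,(-\Phi^+)$, this forces $c^p\alpha\in\Phi^+$, proving the claim. In the indefinite (non-affine) case one takes $\varphi$ to be a dual-action eigenvector for $\lambda$ or $\lambda^{-1}$; a Perron--Frobenius argument on the Tits cone $\overline U$ should place its ray in $\overline C$, and the strict inequality on $N(c^{-1})$ should follow because roots inverted by $c^{-1}$ pair positively with the repelling direction of $c$.

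The genuinely delicate point is the affine case, where $\lambda=1$, the transformation $\sigma(c)$ is no longer semisimple, and the ``positive'' eigenvector is the radical vector $\delta$ spanning $\rad B$ --- so the dual eigenvector leaves $\overline C$ and the functional above degenerates. There one works instead with a translation power: a suitable $c^{h}$ ($h$ the order of the underlying finite Coxeter element) satisfies $\sigma(c^{h})=\idop+\delta\otimes\mu$ for some $\mu\in V^*$, with $\delta=\sum_s d_se_s$ and all $d_s>0$, whence $\sigma(c^{hk})(\beta)=\beta+k\,\mu(\beta)\,\delta$; the claim then reduces to the inequality $\mu(\alpha)\ge 0$ for $\alpha\in N(c^{-1})$, together with a finite check of the intermediate powers $c^{hk+r}$, $0<r<h$. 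Proving $\mu\ge 0$ on $N(c^{-1})$ --- i.e.\ that the translation vector of $c^{h}$ is directed consistently with the walls crossed by $c^{-1}$, which is exactly where irreducibility of $(W,S)$ must enter --- is, I expect, the main obstacle; a uniform alternative is to argue entirely projectively inside $\overline U$, exhibiting an attracting fixed ray of $c$ lying beyond all walls of $N(c^{-1})$, but the affine degeneracy resurfaces there too. Once the claim holds in both cases, the theorem is immediate from the first paragraph.
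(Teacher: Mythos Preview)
The paper does not prove this theorem at all; it is quoted from Speyer's paper \cite{Speyer} and used as a black box. So there is no ``paper's own proof'' to compare with --- the relevant comparison is with Speyer's original argument.

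Your proposal has a genuine gap. The reduction in the first paragraph is fine: straightness of $c$ is equivalent to $c^{p}\alpha\in\Phi^{+}$ for all $\alpha\in N(c^{-1})$ and all $p\ge 1$. But the second and third paragraphs do not prove this. In the non-affine case you assert the existence of a dual eigenvector $\varphi$ for a positive real eigenvalue with $\varphi\in\overline{C}$ \emph{and} $\varphi(\alpha)>0$ for every $\alpha\in N(c^{-1})$; since $\beta_{1}=e_{s_{1}},\dots,\beta_{n}$ form a basis of $V$, this forces $\varphi$ to lie in the \emph{open} chamber $C$, i.e.\ the left Perron eigenvector of $\sigma(c)$ must have all entries strictly positive. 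The matrix $\sigma(c)$ is not non-negative, so ordinary Perron--Frobenius does not apply, and ``a Perron--Frobenius argument on $\overline{U}$'' is not a proof. (Such a strictly positive eigenvector does exist in the indefinite case, but establishing it is real work --- essentially the content of the A'Campo/Howlett/McMullen analysis --- and you have not supplied it.) In the affine case you yourself flag the needed inequality $\mu(\alpha)\ge 0$ for $\alpha\in N(c^{-1})$ as ``the main obstacle'' and leave it open; the ``finite check'' for the intermediate powers $c^{hk+r}$ is also not carried out. So as written the proposal is a plan with the two hardest steps missing.

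It is worth knowing that Speyer's actual proof takes a completely different, and much more elementary, route: there is no spectral theory and no affine/non-affine case split. He tracks signs directly in the matrix entries of the partial products $s_{1}\cdots s_{k}$ (continued periodically), using that each $\sigma(s_{i})$ agrees with the identity outside its $i$\ts{th} row and that the off-diagonal entries in that row are non-negative. The key lemma is a sign-coherence statement for the columns of these partial products, proved by a short induction; straightness falls out immediately and uniformly. The spectral picture you sketch explains \emph{why} the result is true, but Speyer's argument is what actually proves it.
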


\subsection{Conjugacy and normalizers of standard parabolic subgroups}

The goal of this subsection is to provide a criterion to decide whether two proper parabolic subgroups of maximal rank are conjugate in $W$. Furthermore, we show that a proper parabolic subgroup of maximal rank is self-normalizing if $W$ is irreducible. Both results will be needed later in our proof of the main theorem.

The following is a well known fact about Coxeter groups. A proof can be found for instance in \cite[Lemma 3.2]{Qi}.
\begin{Lemma} \label{lem:Conjugacy}
Let $(W,S)$ be a Coxeter system and $I,J \subseteq S$, $w \in W$ such that $W_I = w W_J w^{-1}$. Then $|I|= |J|$, $w_0(\Delta_J) = \Delta_I$ and $I = w_0 J w_0^{-1}$ for some $w_0 \in w W_J$.
\end{Lemma}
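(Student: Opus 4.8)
The statement is essentially a uniqueness-of-reduced-representative result for parabolic subgroups, so the plan is to reduce to the well-known fact that in each coset $wW_J$ there is a unique element of minimal length, and that this minimal-length element has good behaviour with respect to the simple roots of $W_J$. Concretely, first I would invoke the standard decomposition $w = w' u$ with $u \in W_J$ and $w'$ the unique minimal-length representative of the coset $wW_J$; replacing $w$ by $w'$ does not change $wW_Jw^{-1}$ since $W_J$ is normalized by $u$ trivially (i.e. $uW_Ju^{-1}=W_J$). Thus we may assume from the start that $w$ is $(\emptyset,J)$-reduced, meaning $\ell(ws) > \ell(w)$ for all $s \in J$. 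The key consequence of this (a standard lemma on reduced coset representatives, see \cite{Hu90}) is that $w(e_s) \in \Phi^+$ for every $s \in J$, and in fact $w(\Delta_J)$ consists of positive roots.

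Next I would exploit the hypothesis $W_I = wW_Jw^{-1}$. Conjugating reflections, the reflection set of $W_I$ is $\{ws_\alpha w^{-1} : \alpha \in \Phi_J\} = \{s_{w(\alpha)} : \alpha \in \Phi_J\}$, so $w$ carries the root system $\Phi_J$ onto $\Phi_I$ (up to sign on each root): $w(\Phi_J) = \Phi_I$ as sets of lines, hence $w(\Phi_J) = \Phi_I$ as sets since root systems are symmetric. Now intersect with $\Phi^+$: since $w(\Delta_J) \subseteq \Phi^+$ and $\Delta_J$ is the unique simple system of $\Phi_J$ contained in $\Phi_J^+$, and since $w$ maps positive roots of $\Phi_J$ to positive roots of $\Phi_I$ (because a positive root of $\Phi_J$ is a nonnegative combination of $\Delta_J$, which maps to a nonnegative combination of $w(\Delta_J) \subseteq \Phi^+$), we get that $w(\Delta_J)$ is a simple system for $\Phi_I$ contained in $\Phi_I^+$. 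By uniqueness of the positive simple system, $w(\Delta_J) = \Delta_I$, which immediately gives $|I| = |J|$ and, reading off the reflections, $wW_Jw^{-1} = W_I$ with $w J w^{-1} = I$. Taking $w_0 := w$ (the reduced representative, which lies in $wW_J$) finishes all three assertions.

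The main obstacle — really the only subtle point — is justifying that a $(\emptyset,J)$-reduced $w$ sends all of $\Delta_J$, and hence all of $\Phi_J^+$, into $\Phi^+$; everything else is bookkeeping with the bijection between reflections, roots, and hyperplanes. This positivity fact is exactly the content of the standard lemma characterizing minimal-length double-coset (here one-sided coset) representatives in terms of the sign of $w$ on simple roots, and it is available in \cite{Hu90}; once it is in hand, the argument is the short chain of set-theoretic identities above. I would therefore present the proof by citing \cite[Lemma 3.2]{Qi} for the full statement, but if a self-contained argument is wanted, the skeleton is: (i) reduce to $w$ minimal in $wW_J$; (ii) deduce $w(\Phi_J^+) \subseteq \Phi^+$; (iii) use $w(\Phi_J) = \Phi_I$ to conclude $w(\Delta_J) = \Delta_I$; (iv) read off $|I| = |J|$ and $I = wJw^{-1}$ with $w_0 = w \in wW_J$.
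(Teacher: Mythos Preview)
Your proposal is correct and in fact goes beyond what the paper does: the paper does not prove this lemma at all but simply cites \cite[Lemma 3.2]{Qi}, which is the same reference you invoke. The self-contained sketch you outline---replace $w$ by the minimal coset representative in $wW_J$, use $\ell(ws)>\ell(w)$ for $s\in J$ to get $w(\Delta_J)\subseteq\Phi^+$, deduce $w(\Phi_J)=\Phi_I$ from the equality of reflection sets, and then identify $w(\Delta_J)$ with $\Delta_I$ via uniqueness of the positive simple system---is exactly the standard argument behind that citation and is sound.
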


The situation is especially easy for proper parabolic subgroups of maximal rank.

\begin{Lemma} \label{lem:ConjMaxParabolic}
Let $(W,S)$ be an irreducible, infinite Coxeter system of rank $n$ and $I, J \subseteq S$ with $|I|=|J|=n-1$. Then $W_I$ and $W_J$ are conjugate in $W$ if and only if $I = J$. 
\end{Lemma}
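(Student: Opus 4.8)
The plan is to prove the two directions separately. The direction $I=J \Rightarrow W_I$ conjugate to $W_J$ is trivial. For the converse, suppose $W_I = w W_J w^{-1}$ for some $w \in W$. By Lemma~\ref{lem:Conjugacy} we obtain $w_0 \in wW_J$ with $w_0(\Delta_J)=\Delta_I$ and $I = w_0 J w_0^{-1}$; in particular $w_0$ conjugates $W_J$ onto $W_I$ and maps the simple system of $W_J$ to that of $W_I$. Since $|I|=|J|=n-1$, both $\Delta_I$ and $\Delta_J$ are obtained from $\Delta$ by deleting a single simple root, say $\Delta_I = \Delta \setminus \{e_s\}$ and $\Delta_J = \Delta \setminus \{e_t\}$, and the goal becomes to show $s=t$.

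The key step is to exploit the element $w_0(e_t)$. Since $w_0(\Delta_J)=\Delta_I \subseteq \Delta$, the vector $w_0(e_t)$ is a root not lying in $\spanr(\Delta_I)$ (otherwise $w_0$ would map a basis of $V$ into the proper subspace $\spanr(\Delta_I)$, contradicting that $w_0 \in \GL(V)$). I would argue that $w_0(e_t)$ is $\pm$ the unique (up to the $W_I$-action) way to extend $\Delta_I$ to a simple system of $\Phi$, and then track which simple root of $\Delta$ it can be. Concretely: $w_0$ sends positive roots of $\Phi_J$ to roots in $\spanr(\Delta_I)$; by irreducibility the root $w_0(e_t)$ must be connected to $\Delta_I$ in the diagram, and comparing with the connectivity of $\Delta$ around $e_s$ versus $e_t$ forces $s=t$. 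An alternative, cleaner route: use that conjugation by $w_0$ is a diagram isomorphism $\Gamma(W_J,J) \to \Gamma(W_I,I)$, so deleting $t$ from $\Gamma(W,S)$ and deleting $s$ from $\Gamma(W,S)$ yield isomorphic graphs \emph{together with} the extra data of how the deleted vertex attaches; then invoke irreducibility (connectedness of $\Gamma(W,S)$) to pin down $s=t$.

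I expect the main obstacle to be making precise why the conjugating element $w_0$ must actually \emph{fix} $S$ (equivalently $w_0 = 1$, so that $I=J$ on the nose rather than merely $W_I$, $W_J$ being equal as subgroups) — this is where irreducibility and maximality of the rank are both essential, and where one must rule out ``diagram automorphism'' phenomena: in general a Coxeter group can have a nontrivial automorphism permuting $S$, and one has to use that such an automorphism can only identify $W_I$ with $W_J$ when $I=J$ because both are the unique standard parabolic of their isomorphism type obtained by deleting one node, \emph{which need not be true}. The safe fix is to not claim $w_0=1$ at all: it suffices to show $I=J$, and for that one only needs $\Delta_I$ and $\Delta_J$ to be $W$-conjugate simple systems of the same standard parabolic, which by the normalizer structure of standard parabolics (the stabilizer of $\overline{C}$ is trivial, so $\Delta$ is the unique simple system inside $\overline{C}^\ast$) forces the two deleted nodes to coincide. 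I would phrase the final step via the Tits cone: $W_I$ fixes a face of $\overline{C}$ of codimension $1$, namely the one cut out by $e_s^\ast = 0$, and similarly $W_J$ fixes the face cut out by $e_t^\ast = 0$; a conjugacy $W_I = wW_Jw^{-1}$ sends one face to a $W$-translate of the other inside $U$, and since distinct codimension-one faces of $\overline C$ lie in distinct $W$-orbits when $W$ is infinite irreducible (the walls $e_s^\ast=0$ are pairwise non-conjugate because the parabolic $W_{S\setminus\{s\}}$ is infinite, forcing the face to be "deep" in a unique way), we conclude $s=t$, i.e. $I=J$.
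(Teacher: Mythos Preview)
Your proposal is exploratory rather than a proof: you sketch three approaches and none of them is carried through. The diagram-isomorphism route you already flag as problematic, and indeed it fails outright --- in affine type $\widetilde{A}_{n-1}$ all maximal standard parabolics are isomorphic (type $A_{n-1}$), yet the lemma asserts they are pairwise non-conjugate, so no argument based only on the isomorphism type of $\Gamma(W_I,I)$ can succeed. Your Tits-cone argument has a concrete error: you claim the codimension-one faces of $\overline C$ are non-conjugate ``because the parabolic $W_{S\setminus\{s\}}$ is infinite'', but this is false whenever $(W,S)$ is affine (then every $W_{S\setminus\{s\}}$ is finite), and even when it happens to be true it does not by itself yield non-conjugacy of the faces --- that assertion is essentially equivalent to what you are trying to prove. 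The first approach, analysing $w_0(e_t)$, is too vague to assess; you never say what property of that root forces $s=t$, and the $\widetilde{A}_{n-1}$ example again shows that connectivity considerations alone cannot distinguish the deleted vertices.

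The paper bypasses all of this by invoking the Deodhar--Krammer criterion (Proposition~\ref{prop:GraphConjParab}): two standard parabolics $W_I$, $W_J$ are conjugate if and only if $I$ and $J$ lie in the same component of an auxiliary graph $G(W,S)$, whose edges out of $I$ are given by the elementary moves $\nu(I,s)$ for $s\in S\setminus I$. When $|I|=n-1$ there is a unique such $s$, and then $I\cup\{s\}=S$; since $(W,S)$ is irreducible the relevant component $K$ equals $S$, and since $W$ is infinite $K$ is not of spherical type, so $\nu(I,s)$ is undefined. Hence $I$ is isolated in $G(W,S)$, and the lemma follows. This is the missing idea in your attempt: a structural criterion for conjugacy of standard parabolics that reduces the question to whether $S$ itself is spherical.
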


To prove this Lemma we will use a criterion of Krammer \cite{Daan1} to decide whether two standard parabolic subgroups are conjugate. This criterion is based on previous work by Deodhar \cite{Deodhar}. We will give a short introduction.

Let $(W,S)$ be a Coxeter system, $I \subseteq S$ and $s \in S \setminus I$. We will identify a subset $J \subseteq S$ with its full subgraph of $\Gamma(W,S)$. Let $K$ be the connected component of $I \cup \{ s\}$ containing $s$.

\begin{itemize}
\item If $K$ is of spherical type, we define $\nu(I,s):= w_{K \setminus \{s \}} w_K$, where $w_{K \setminus \{s \}}$ (resp. $w_K$) is the longest element in $W_{K \setminus \{s \}}$ (resp. in $W_K$);
\item if $K$ is not of spherical type, then $\nu(I,s)$ is not defined.
\end{itemize}
If $\nu(I,s)$ is defined, then we have $\nu(I,s)^{-1} \Delta_I = \Delta_J$ for $J = (I \cup \{ s\}) \setminus \{ t \}$ for some $ t \in K$. 

We define a directed graph $G = G(W,S)$ whose vertices are the subsets of $S$. For subsets $I, J \subseteq S$ there is an edge $I \stackrel{s}{\edgedir} J$ if $\nu(I,s)$ is defined \textbf{and} $\nu(I,s)^{-1} \Delta_I = \Delta_J$.

\begin{Proposition}[{\cite[Corollary 3.1.7]{Daan1}}] \label{prop:GraphConjParab}
Let $(W,S)$ be a Coxeter system and $I, J \subseteq S$. Then $W_I$ and $W_J$ are conjugate in $W$ if and only if $I$ and $J$ are in the same connected component of the graph $G(W,S)$.
\end{Proposition}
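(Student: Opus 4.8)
The plan is to treat the two implications separately, with essentially all of the work in the converse. For the ``if'' direction I would check that a single edge already witnesses conjugacy. If $I \stackrel{s}{\edgedir} J$, then by definition $\nu(I,s)^{-1}\Delta_I = \Delta_J$; since $g s_\alpha g^{-1} = s_{g\alpha}$ for every $g \in W$ and $\alpha \in \Phi$, applying $g = \nu(I,s)^{-1}$ to the reflections generating $W_I$ yields $\nu(I,s)^{-1} W_I\, \nu(I,s) = W_J$. Because conjugacy is a symmetric and transitive relation, traversing any path in $G(W,S)$ between $I$ and $J$ (in either orientation of its edges) produces an element of $W$ conjugating $W_I$ to $W_J$. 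Hence vertices lying in the same connected component index conjugate standard parabolic subgroups.

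For the converse I assume $W_I$ and $W_J$ are conjugate and aim to build a path. By Lemma \ref{lem:Conjugacy} there is $w_0 \in W$ with $w_0\Delta_J = \Delta_I$; I would argue by induction on $\ell(w_0)$ over all elements with this property. Note first that $w_0\Delta_J=\Delta_I$ automatically forces $w_0\Phi_J^+ \subseteq \Phi^+$ (a nonnegative combination of simple roots that is a root is positive), and applying the same reasoning to $w_0^{-1}\Delta_I = \Delta_J$ gives $w_0^{-1}\Phi_I^+ \subseteq \Phi^+$; thus $w_0$ is already reduced on both sides. If $\ell(w_0)=0$ then $\Delta_I = \Delta_J$, so $I=J$ and the two vertices coincide. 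If $\ell(w_0)>0$, pick a left descent, i.e. $s \in S$ with $\ell(sw_0)<\ell(w_0)$, equivalently $w_0^{-1}e_s \in -\Phi^+$. Since $w_0^{-1}\Phi_I^+ \subseteq \Phi^+$ this already rules out $s \in I$, so $s \in S\setminus I$ and $\nu(I,s)$ is at least a candidate.

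The crux --- and the step I expect to be the genuine obstacle --- is to show that the connected component $K$ of $I \cup\{s\}$ containing $s$ is of spherical type, so that $\nu(I,s)$ is in fact defined. This is the geometric heart of the Deodhar--Krammer analysis: one examines the positive roots of $\Phi_{I\cup\{s\}}$ that $w_0^{-1}$ sends to negative roots (a nonempty set, as it contains $e_s$, each of whose members has strictly positive $e_s$-coefficient because $w_0^{-1}$ keeps $\Phi_I^+$ positive), and shows that controlling this inversion set in the corank-one situation around $s$ forces the local behaviour at $s$ to be that of the longest element of a finite Coxeter group; I would import this from \cite{Deodhar, Daan1} rather than reprove it. Granting sphericity, $\nu(I,s) = w_{K\setminus\{s\}}w_K$ is defined and gives an edge $I \stackrel{s}{\edgedir} I'$ with $\Delta_{I'} = \nu(I,s)^{-1}\Delta_I$. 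Setting $w_1 := \nu(I,s)^{-1}w_0$ we get $w_1\Delta_J = \nu(I,s)^{-1}\Delta_I = \Delta_{I'}$, and a length computation --- using that $w_K$ and $w_{K\setminus\{s\}}$ are the longest elements of their groups and that $s$ was chosen as a descent --- shows $\ell(w_1)<\ell(w_0)$. By the inductive hypothesis $I'$ and $J$ lie in the same connected component of $G(W,S)$, and the edge $I \stackrel{s}{\edgedir} I'$ joins $I$ to $I'$, completing the induction and hence the proof.
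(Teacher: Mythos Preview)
The paper does not give its own proof of this proposition at all: it is quoted verbatim as \cite[Corollary 3.1.7]{Daan1} and used as a black box, so there is nothing in the paper to compare your argument against.

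That said, your sketch is essentially the Deodhar--Howlett argument underlying Krammer's corollary, and the overall architecture (induction on $\ell(w_0)$, choose a left descent $s\notin I$, produce the edge $I\stackrel{s}{\edgedir}I'$, reduce length) is correct. Two places deserve tightening if you want a self-contained proof rather than a pointer to \cite{Deodhar,Daan1}. First, the sphericity of $K$: you correctly flag this as the crux, but ``importing'' it is exactly what the paper does by citing Krammer, so your write-up is not more informative than the citation on this point. Second, the length drop $\ell(\nu(I,s)^{-1}w_0)<\ell(w_0)$ is not a one-line consequence of ``$s$ was a descent''; in the standard argument one shows that $\nu(I,s)$ is actually a left prefix of $w_0$ (equivalently $\ell(w_0)=\ell(\nu(I,s))+\ell(\nu(I,s)^{-1}w_0)$), which uses that $w_0$ is the minimal-length element of the double coset $W_{I\cup\{s\}}\,w_0\,W_J$ and the Kilmoyer/Howlett description of such minimal representatives. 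Without that, a single descent only buys you a drop of $1$, not $\ell(\nu(I,s))$.
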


\begin{proof}[Proof of Lemma \ref{lem:ConjMaxParabolic}]
Let $I \subseteq S$ with $|I| = n-1$. If $s \in S \setminus I$, then $I \cup \{s \}= S$. In particular, since $(W,S)$ is irreducible, the connected component $K$ of $I \cup \{s \}$ containing $s$ is the whole of $S$. Therefore $\nu(I,s)$ is not defined and $I$ is an isolated vertex in the graph $G(W,S)$. By Proposition \ref{prop:GraphConjParab} the parabolic subgroup $W_I$ is conjugate to $W_J$ for some $J \subseteq S$ if and only if $I = J$. 
\end{proof}

The graph $G(W,S)$ also contains substantial information about the normalizers of the standard parabolic subgroups. The following is from \cite[Section 3.1]{Daan1}. We fix a subset $I \subset S$ and denote the connected component of $G(W,S)$ containing $I$ by $\mathcal{K}^\circ$. Let $\mathcal{T}$ be a spanning tree of $\mathcal{K}^\circ$. For $J \in \mathcal{T}$, let
\[\mu(J)=\nu(I_0,s_0) \cdots \nu(I_t,s_t),\]
where 
\[I=I_0 \stackrel{s_0}{\edgedir} I_1 \stackrel{s_1}{\edgedir} \dots \stackrel{s_t}{\edgedir} I_{t+1}=J \]
is the unique non-reversing path in $\mathcal{T}$ from $I$ to $J$. For any edge $e=J_1 \stackrel{s}{\edgedir} J_2$ in $\mathcal{K}^\circ$ with $J_1, J_2 \in \mathcal{T}$ we set $\lambda(e)=\mu(J_1) \nu(J_1,s) \mu(J_2)^{-1}$.

\begin{Proposition}[{\cite[Proposition 2.1]{Brink}, \cite[Corollary 3.1.5]{Daan1}}] \label{prop:Normalizer}
Let $(W,S)$ be a Coxeter system and let $I \subseteq S$. Then:
\begin{enumerate}[label=(\alph*)]
\item $N_W(W_I)$ is the semidirect product of $W_I$ by the group $N_I=\{w \in W \, \mid \, w \Delta_I=\Delta_I\}$.
\item $N_I$ is generated by the $\lambda(e)$ where $e$ is an edge of $\mathcal{K}^\circ$ that is not an edge of $\mathcal{T}$.
\end{enumerate}
\end{Proposition}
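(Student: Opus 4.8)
The plan is to treat the two assertions separately: (a) is essentially formal once one has the standard fact about minimal-length coset representatives, whereas (b) is the combinatorial core and rests on the analysis of the graph $G(W,S)$ carried out in \cite[Section 3.1]{Daan1} (building on \cite{Deodhar}).

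For (a), fix $w \in N_W(W_I)$ and let $d$ be the unique element of minimal length in the coset $wW_I$, which lies in $N_W(W_I)$. By the standard characterization of such representatives, $\ell(ds) > \ell(d)$ for all $s \in I$, i.e. $d(\Delta_I) \subseteq \Phi^+$, and hence (by linearity) $d(\Phi_I^+) \subseteq \Phi^+$. On the other hand $d \in N_W(W_I)$ permutes $\Phi_I$, since conjugation sends reflections of $W_I$ to reflections of $W_I$; thus $d(\Phi_I^+) \subseteq \Phi_I \cap \Phi^+ = \Phi_I^+$, and comparing with $d(\Phi_I) = \Phi_I = \Phi_I^+ \sqcup (-\Phi_I^+)$ forces $d(\Phi_I^+) = \Phi_I^+$. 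Therefore $d$ permutes the indecomposable elements of $\Phi_I^+$, that is $d(\Delta_I) = \Delta_I$ and $d \in N_I$. This gives $N_W(W_I) = W_I N_I$. Since $N_I \subseteq N_W(W_I)$, $W_I$ is normal there, and $W_I \cap N_I = \{1\}$ (an element of $W_I$ stabilizing $\Delta_I$ has length $0$ in $W_I$), we conclude $N_W(W_I) = W_I \rtimes N_I$.

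For (b) the convenient framework is the groupoid $\mathcal{N}$ with object set $\{J \mid J \subseteq S\}$ and morphisms $J_1 \to J_2$ the elements $w \in W$ with $w(\Delta_{J_1}) = \Delta_{J_2}$, composition being multiplication in $W$; its vertex group at $I$ is exactly $N_I$, and by Lemma \ref{lem:Conjugacy} and Proposition \ref{prop:GraphConjParab} the objects admitting a morphism to $I$ are precisely the vertices of $\mathcal{K}^\circ$. By the very definition of $\nu$, each edge $J \stackrel{s}{\edgedir} J'$ of $G(W,S)$ provides a morphism $\nu(J,s)^{-1} \colon J \to J'$; telescoping these $\nu$'s along the unique non-reversing path in $\mathcal{T}$ shows that $\mu(J)^{-1}$ is a morphism $I \to J$, so for a non-tree edge $e = J_1 \stackrel{s}{\edgedir} J_2$ the element $\lambda(e) = \mu(J_1)\,\nu(J_1,s)\,\mu(J_2)^{-1}$ is a morphism $I \to I$, i.e. $\lambda(e) \in N_I$. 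Thus (b) amounts to the reverse inclusion: every element of $N_I$ is a product of the $\lambda(e)^{\pm 1}$.

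To get this I would invoke the key generation result behind $G(W,S)$ (\cite[Section 3.1]{Daan1}, after \cite{Deodhar}): the groupoid $\mathcal{N}$ is generated by the elementary morphisms $\nu(J,s)^{\pm 1}$ attached to the edges of $G(W,S)$; equivalently, every morphism of $\mathcal{N}$ is a product of $\nu$'s and their inverses read along a walk in $G(W,S)$. Granting it, an element $n \in N_I$ is such a product along a closed walk of $\mathcal{K}^\circ$ based at $I$, and the elementary graph-theoretic fact that every closed walk in a connected graph based at a fixed vertex decomposes — after absorbing the spanning-tree portions — into the loops that run through $\mathcal{T}$ to one endpoint of a single non-tree edge, across that edge, and back through $\mathcal{T}$ finishes the proof, since under the above dictionary those loops are precisely the $\lambda(e)^{\pm 1}$. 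The main obstacle is the generation result itself: it is proved by induction on $\ell(w)$ for a nontrivial morphism $w\colon J_1 \to J_2$ of $\mathcal{N}$, where one exhibits a descent $s$ of $w$, checks that the component $K$ of $J_1 \cup \{s\}$ so produced is of spherical type, splits off the corresponding $\nu(J_1,\cdot)$, and recurses on a morphism of strictly smaller length; verifying the spherical-type step and that the induction cannot stall is the genuinely technical part, everything else being bookkeeping with simple systems, telescoping products, and spanning trees.
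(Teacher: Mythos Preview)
The paper does not give a proof of this proposition at all: it is stated with citations to \cite[Proposition 2.1]{Brink} and \cite[Corollary 3.1.5]{Daan1} and then used as a black box (in the proof of Corollary \ref{cor:normalizer}). So there is no ``paper's own proof'' to compare against here.

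That said, your sketch is a faithful outline of how the cited references establish the result. Part (a) is correct as written: the minimal coset representative argument together with the characterization of $\Delta_I$ as the indecomposable elements of $\Phi_I^+$ is exactly the standard route (this is essentially \cite{Brink}). For part (b) you have correctly isolated the one nontrivial ingredient, namely that the groupoid $\mathcal{N}$ is generated by the elementary morphisms $\nu(J,s)^{\pm 1}$; once that is in hand, deducing the generators of the vertex group $N_I$ from a spanning tree is pure fundamental-groupoid bookkeeping, as you say. Your description of the inductive proof of the generation result (find a descent $s \in S \setminus J_1$, argue that the relevant component is spherical, peel off a $\nu$, recurse on a shorter morphism) matches the Deodhar--Howlett--Krammer argument. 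The only point I would flag is that you should be explicit about why the descent $s$ can be taken in $S \setminus J_1$ (since $w(\Delta_{J_1}) = \Delta_{J_2} \subseteq \Phi^+$, any descent of $w$ automatically lies outside $J_1$) and why the resulting $K$ is spherical (because $w$ sends $\Phi_K^+ \setminus \Phi_{K \setminus \{s\}}^+$ to negative roots, forcing $\Phi_K$ finite); these are the steps where an induction could in principle stall, and they deserve a sentence each rather than being folded into ``genuinely technical''.
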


As an easy corollary we get the following.

\begin{corollary} \label{cor:normalizer}
Let $(W,S)$ be an irreducible, infinite Coxeter system of rank $n$ and let $I \subseteq S$ with $|I|=n-1$. Then $N_W(W_I)=W_I$.
\end{corollary}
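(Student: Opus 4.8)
The plan is to derive the statement directly from Proposition~\ref{prop:Normalizer}, using that a corank-one standard parabolic subgroup of an irreducible infinite system is an isolated vertex of Krammer's graph $G(W,S)$, just as in the proof of Lemma~\ref{lem:ConjMaxParabolic}.

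By Proposition~\ref{prop:Normalizer}(a) it suffices to show that the group $N_I = \{w \in W \mid w\Delta_I = \Delta_I\}$ is trivial. To this end I would check that $I$ is an isolated vertex of $G(W,S)$. For the unique $s \in S \setminus I$ we have $I \cup \{s\} = S$, so by irreducibility the connected component of $I \cup \{s\}$ containing $s$ is all of $S$, which is not of spherical type since $W$ is infinite; hence $\nu(I,s)$ is undefined and there is no edge with source $I$. An edge $J \stackrel{t}{\edgedir} I$ would likewise force $I = (J \cup \{t\}) \setminus \{t'\}$ with $t \in S \setminus J$, so $|J| = |I| = n-1$ and $J \cup \{t\} = S$, and the same argument shows $\nu(J,t)$ is undefined; hence there is no edge with target $I$ either.

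Consequently the connected component $\mathcal{K}^\circ$ of $G(W,S)$ containing $I$ is the single vertex $\{I\}$, its (unique) spanning tree $\mathcal{T}$ has no edges, and there are no edges of $\mathcal{K}^\circ$ outside $\mathcal{T}$. By Proposition~\ref{prop:Normalizer}(b), $N_I$ is generated by the empty set, so $N_I = \{1\}$ and therefore $N_W(W_I) = W_I$. There is essentially no obstacle here; the only minor point requiring care is to rule out incoming as well as outgoing edges at $I$, so that $I$ is genuinely isolated.
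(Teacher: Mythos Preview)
Your proposal is correct and follows essentially the same route as the paper: both use Proposition~\ref{prop:Normalizer} together with the observation (from the proof of Lemma~\ref{lem:ConjMaxParabolic}) that $I$ is an isolated vertex of $G(W,S)$, forcing $N_I$ to be trivial. Your explicit check that there are no incoming edges at $I$ is a small extra bit of care the paper leaves implicit, but it is harmless and the arguments are otherwise identical.
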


\begin{proof}
We already saw in the proof of Lemma \ref{lem:ConjMaxParabolic} that $I$ is an isolated vertex. In particular, the connected component $\mathcal{K}^\circ$ of $G(W,S)$ containing $I$ consists only of $I$. Hence $N_I$ is trivial and $N_W(W_I)=W_I$ by Proposition \ref{prop:Normalizer}.
\end{proof}

\subsection{Outward roots} \label{sec:Outward}

Let $(W,S)$ be an infinite Coxeter system of finite rank and let $c \in W$ be a Coxeter element. By work of Krammer \cite{Daan1} the centralizer $C_W(c)$ of $c$ acts on the so-called outward roots for $c$. To better understand $C_W(c)$ and eventually prove Theorem \ref{thm:main}, we study this action in more detail in the remainder of this paper. We start by describing the outward roots for straight elements. \\

Let $w \in W$. We call a root $\alpha \in \Phi$ \defn{outward} for $w$ if the following holds for some $x \in U^\circ$. For almost all $m \in \mathbb{Z}$, we have $m  (w^m x (\alpha))  < 0$. The set of outward roots is denoted by $\operatorname{Out}(w)$. It is obvious that for every outward root $\alpha \in \operatorname{Out}(w)$ and every $k \in \mathbb{Z}$ we have $w^k \alpha \in \operatorname{Out}(w)$. In other words $\langle w \rangle$ acts on $\operatorname{Out}(w)$.  By \cite[Lemma 5.2.6]{Daan1}, the cardinality $r(w)$ of the set of orbits $\langle w \rangle \setminus \operatorname{Out}(w)$ for this action is finite. {Moreover, it follows from \cite[Corollary 5.2.4]{Daan1} that $C_{W}(w)$ acts on $\operatorname{Out}(w)$ (note the connection between \textit{odd} and \textit{outward} roots described in \cite[Definition 5.5.6]{Daan1}).

\begin{Proposition} \label{outward straight} If $w \in W$ is straight then $r(w)=\ell(w)$.
\end{Proposition}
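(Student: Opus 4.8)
The plan is to connect outward roots for a straight element $w$ to the "inversion set" $N(w) = \{\alpha \in \Phi^+ \mid w^{-1}\alpha < 0\}$, whose size is $\ell(w)$, and to show that the orbit structure of $\langle w\rangle$ on $\operatorname{Out}(w)$ matches this. First I would unwind the definition: a root $\alpha$ is outward iff for some $x \in U^\circ$ and almost all $m$, the sign of $w^m x(\alpha)$ equals the sign of $m$. Geometrically, applying $\sigma(w)^m$ to a generic point $x$ of the interior of the Tits cone, we want $x(w^{-m}\alpha)$ to be eventually negative for $m \to +\infty$ and eventually positive for $m \to -\infty$ (using the dual action). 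So I would reformulate: $\alpha \in \operatorname{Out}(w)$ iff the sequence $(w^{-m}\alpha)_{m \in \ZZ}$ is "eventually positive as $m \to -\infty$ and eventually negative as $m \to +\infty$" in the sense detected by a generic $x \in U^\circ$ — i.e. the root $w^{-m}\alpha$ moves from $\Phi^+$ to $\Phi^-$ exactly once as $m$ runs over $\ZZ$, crossing near $m = 0$.

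The key point for a straight element is that $w$ has no "periodic" or "bounded" roots interfering: because $\ell(w^m) = |m|\ell(w)$, the inversion sets satisfy $N(w^m) \subseteq N(w^{m+1})$ for $m \geq 0$ (and dually for $m \leq 0$), and the nested union $\bigcup_{m \geq 1} N(w^m)$ together with its negative partner and a "bounded part" partition $\Phi$. Concretely, each $\alpha \in \Phi$ falls into exactly one of: (i) $w^{-m}\alpha > 0$ for all $m \gg 0$ and $< 0$ for all $m \ll 0$ (these are the outward roots, up to checking the genericity against $U^\circ$ — one should verify $x \in U^\circ$ sees the sign correctly, which follows since $U^\circ$ is the set of $x$ with $x(\alpha)\neq 0$ for all but finitely many... more precisely one uses that a generic $x$ in $U^\circ$ pairs nontrivially with every root); (ii) the reverse; (iii) $w^{-m}\alpha$ lies in a finite set for all $m$, which for a straight (hence essential, infinite-order) element forces $\alpha$ to be fixed by some power, contradicting straightness unless this set is empty. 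Then I would show $\operatorname{Out}(w)$ is exactly the set of type-(i) roots, and that the map $\alpha \mapsto$ (the unique $m_0$ such that $w^{-m_0}\alpha > 0$ but $w^{-m_0-1}\alpha < 0$, followed by recording which element of $N(w)$ it is) gives a bijection between $\langle w \rangle$-orbits on $\operatorname{Out}(w)$ and $N(w)$, whence $r(w) = |N(w)| = \ell(w)$.

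More carefully, the cleanest route is: for $\alpha \in \operatorname{Out}(w)$ there is a unique integer $m_\alpha$ with $w^{m_\alpha}\alpha \in \Phi^+$ and $w^{m_\alpha - 1}\alpha \in \Phi^-$ — this "crossing index" is well-defined precisely because the sign sequence changes exactly once, and $w^k$ shifts $m_\alpha$ by $-k$. Hence every $\langle w\rangle$-orbit contains a unique representative $\beta$ with $m_\beta = 0$, i.e. $\beta \in \Phi^+$ and $w^{-1}\beta \in \Phi^-$, equivalently $\beta \in N(w^{-1})$; and conversely I must check each $\beta \in N(w^{-1})$ is genuinely outward, which is where straightness does the real work — it guarantees $w^m\beta$ stays negative for all $m \leq -1$ and $w^{-m}\beta$ stays... wait, I want $w^m\beta > 0$ for $m \geq 0$ and $< 0$ for $m \leq -1$; the forward direction $w^m \beta$ for $m\geq 1$ being positive is the nontrivial claim and follows from the fact that for straight $w$ the sets $N(w^{-m})$ are nested increasing, so once a root leaves $\Phi^+$ under $w^{-1}$ it never returns. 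Finally I would translate between this combinatorial description and the original topological one via $x \in U^\circ$, using that such $x$ pairs positively with $\beta$ iff $\beta \in \Phi^+$ for roots in the relevant span, or more robustly invoking \cite[Lemma 5.2.6]{Daan1} machinery already cited. Since $|N(w^{-1})| = \ell(w^{-1}) = \ell(w)$, this yields $r(w) = \ell(w)$.

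The main obstacle I anticipate is the careful matching of the topological definition of "outward" (involving a generic $x \in U^\circ$ and "almost all $m$") with the clean combinatorial statement about sign changes of $w^m\alpha$; one has to rule out pathological $x$ on walls and confirm that the "almost all $m$" clause is compatible with the unique-crossing picture, using properties of the Tits cone and the fact that for a straight element no root is eventually fixed or periodic. Everything else is a bookkeeping consequence of $N(w^m)$ being nested, which is equivalent to straightness.
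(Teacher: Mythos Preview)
Your approach is essentially correct, but it is quite different from the paper's. The paper's proof is a one-liner: it invokes Krammer's formula $r(w)=\lim_{n\to\infty}\ell(w^n)/n$ (\cite[Corollary~5.6.6]{Daan1}) and then observes that for straight $w$ the limit is $\ell(w)$. You instead build an explicit bijection between $\langle w\rangle$-orbits on $\operatorname{Out}(w)$ and the inversion set $\Phi^+(w^{-1})$, using that straightness forces the sign sequence $(w^{-m}\alpha)_{m\in\ZZ}$ of an outward root to change exactly once. (A small wobble: what you call $N(w^{-1})$ at one point---the set of $\beta>0$ with $w^{-1}\beta<0$---is $\Phi^+(w^{-1})$ in the paper's notation, not $N(w^{-1})$; and the step ``$w^m\beta>0$ for all $m\ge 1$ when $\beta\in\Phi^+(w^{-1})$'' follows cleanly from $\ell(w\cdot w^m)=\ell(w)+\ell(w^m)$, which is equivalent to $\Phi^+(w^{-1})\cap\Phi^+(w^m)=\emptyset$, rather than from nestedness of $N(w^{-m})$ as you tentatively suggest.) What your route buys is that it is self-contained and avoids the translation-length machinery behind Krammer's limit formula; in fact you are simultaneously proving the paper's next proposition, which identifies $\Phi^+(w^{-1})$ as a set of orbit representatives and which the paper proves \emph{using} the present proposition as a counting input. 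What the paper's route buys is brevity and a clean separation of concerns: the hard analytic fact about $r(w)$ is outsourced, and the representative set is then obtained afterwards by producing $\ell(w)$ outward roots in distinct orbits and matching cardinalities.
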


\begin{proof}
By \cite[Corollary 5.6.6]{Daan1} we have $\lim\limits_{n \to \infty} \frac{\ell(w^n)}{n} = r(w)$. The assertion follows since $w$ is straight.
\end{proof}

For $w \in W$ we denote the set of positive roots that $w$ sends to negative roots by $\Phi^+(w)$. This set is finite and can be given explicitly. For simplicity we denote a simple root $e_{s_i}$ by $e_i$. If $w=s_{j_1} \cdots s_{j_k}$ ($s_{j_i} \in S)$ is a reduced expression then $\Phi^+(w)$ consists of the $k$ distinct roots  $s_{j_k} s_{j_{k-1}} \cdots s_{j_{i+1}}(e_{j_i})$ for $i \in \{1, \dots , k-1\}$ and $e_{i_k}$ (see for example \cite[Excercise II.5.6.1]{Hu90}). In particular, $\Phi^+(w^{-1})$ then consists of the $k$ distinct roots $\beta_{i}:=s_{j_1} \cdots s_{j_{i-1}}(e_{j_i})$ for $i \in \{2, \dots k\}$ and $\beta_{1}:=e_{j_1}$.

\begin{Proposition} \label{prop:outward roots} \label{prop:Representatives}
Let $w \in W$ be straight. Then $\Phi^+(w^{-1})$ is a set of representatives for $\langle w \rangle \setminus \operatorname{Out}(w)$.
\end{Proposition}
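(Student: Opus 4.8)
The plan is to show that $\Phi^+(w^{-1})$ consists of outward roots lying in pairwise distinct $\langle w\rangle$-orbits, and then invoke the counting result to conclude that these orbits are all of them. Since $w$ is straight, Proposition~\ref{outward straight} tells us that the number $r(w)$ of orbits equals $\ell(w) = |\Phi^+(w^{-1})|$; so it suffices to establish (i) $\Phi^+(w^{-1}) \subseteq \operatorname{Out}(w)$ and (ii) the $\ell(w)$ roots $\beta_1, \dots, \beta_k$ lie in distinct orbits. Both parts will come down to controlling the sign of $w^m x(\beta_i)$ as $m$ runs over $\mathbb{Z}$.

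First I would record the basic sign behaviour. Fix a reduced expression $w = s_{j_1}\cdots s_{j_k}$ and the roots $\beta_i = s_{j_1}\cdots s_{j_{i-1}}(e_{j_i})$ from the excerpt. The key observation is that straightness makes the concatenated word $w^m = (s_{j_1}\cdots s_{j_k})^m$ reduced for every $m \geq 0$, so from the explicit description of $\Phi^+(\cdot)$ one reads off exactly which positive roots become negative under $w^{-m}$ and under $w^m$. Concretely, $\beta_i \in \Phi^+(w^{-1}) \subseteq \Phi^+(w^{-m})$ for all $m \geq 1$, meaning $w^{-m}(\beta_i) < 0$; and a symmetric computation with $\Phi^+(w^m)$ shows that $w^m(\beta_i) > 0$ for all $m \geq 1$ as well (here one uses that the $\beta_i$ do \emph{not} appear among the roots of $\Phi^+(w)$, because each $\beta_i$ is sent by $w^{-1}$ to a negative root, hence cannot be sent by $w$ to a negative root — roots sent to negatives by $w$ and by $w^{-1}$ are disjoint sets for any group element). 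Now for $x \in U^\circ$ and $\alpha > 0$ we have $x(\alpha) > 0$, while for $\alpha < 0$ we have $x(\alpha) < 0$ (since $x$ lies in the interior of the Tits cone and therefore evaluates strictly positively on every positive root; this is where the hypothesis $x \in U^\circ$ rather than merely $x \in U$ is used). Combining: for $m \geq 1$, $w^m x(\beta_i) = x(w^{-m}\beta_i) < 0$, and for $m \leq -1$, $w^m x(\beta_i) = x(w^{|m|}\beta_i) > 0$. Thus $m\,(w^m x(\beta_i)) < 0$ for all $m \neq 0$, proving $\beta_i \in \operatorname{Out}(w)$ and giving (i).

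For (ii), suppose $\beta_i$ and $\beta_j$ lie in the same orbit, say $\beta_j = w^\ell \beta_i$ for some $\ell \in \mathbb{Z}$; I want to force $\ell = 0$ and then $i = j$. The idea is that the sign pattern computed above is rigid under the $\langle w\rangle$-action in a way that pins down $\ell$. More precisely, for a fixed $x \in U^\circ$, consider the finite set $\{\,m : w^m x(\beta_i) > 0\,\}$; the argument above shows it is exactly $\mathbb{Z}_{<0}$, i.e.\ of the form $\{m : m < 0\}$, for every $i$. But replacing $\beta_i$ by $w^\ell \beta_i$ shifts this set by $-\ell$: $w^m x(w^\ell \beta_i) = w^{m+\ell} x(\beta_i) > 0 \iff m + \ell < 0 \iff m < -\ell$. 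For this shifted set to again be $\mathbb{Z}_{<0}$ — as it must be if $w^\ell\beta_i = \beta_j \in \Phi^+(w^{-1})$ — we need $\ell = 0$. Hence $\beta_i = \beta_j$, and since the $\beta_1, \dots, \beta_k$ are pairwise distinct, $i = j$. Therefore the $\ell(w)$ roots $\beta_i$ represent $\ell(w) = r(w)$ distinct orbits, which must be all of $\langle w\rangle\setminus\operatorname{Out}(w)$.

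The main obstacle I anticipate is the sign-tracking in the first step: one needs that straightness really does force $w^m$ to be reduced as the obvious concatenation (a standard consequence of $\ell(w^m) = m\,\ell(w)$ via the subword property) and, more delicately, one needs the clean statement "$w^m(\beta_i) > 0$ for all $m \geq 1$," which requires knowing that none of the $\beta_i$ is among the inversions of $w$. This last point is where the disjointness $\Phi^+(w) \cap \Phi^+(w^{-1}) = \emptyset$ (an inversion-set identity, valid for any $w$) does the work, but it must be invoked carefully so as not to conflate $\Phi^+(w)$-type and $\Phi^+(w^{-1})$-type roots. The evaluation facts about $x \in U^\circ$ — that it is strictly positive on all of $\Phi^+$ — should be cited from standard Tits-cone theory rather than reproved.
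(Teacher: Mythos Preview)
Your approach matches the paper's: show $\Phi^+(w^{-1}) \subseteq \operatorname{Out}(w)$ by controlling the sign of $w^{-m}\beta_i$, argue the $\beta_i$ lie in distinct $\langle w\rangle$-orbits, and conclude via the count $r(w) = \ell(w)$ from Proposition~\ref{outward straight}. There are, however, two genuine errors.

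The more serious one is the claim that $x(\alpha) > 0$ for every positive root $\alpha$ whenever $x \in U^\circ$. This is false: a generic $x \in U^\circ$ lies in some chamber $vC$ and is negative on the roots in $\Phi^+(v)$. Strict positivity on all of $\Phi^+$ characterizes $C$, not $U^\circ$. The fix is easy --- take $x \in C$ (which lies in $U^\circ$ since $C$ is open in $V^*$ and contained in $U$), exactly as the paper does by writing $x \in C \cap U^\circ$ --- but your explanation of \emph{why} the choice of $x$ works is incorrect.

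Second, in arguing $w^m\beta_i > 0$ for all $m \geq 1$ you invoke $\Phi^+(w) \cap \Phi^+(w^{-1}) = \emptyset$, which as stated covers only $m = 1$. To get all $m$, apply the same disjointness to $w^m$: since $\beta_i \in \Phi^+(w^{-1}) \subseteq \Phi^+(w^{-m})$ and $\Phi^+(w^m) \cap \Phi^+(w^{-m}) = \emptyset$, one obtains $\beta_i \notin \Phi^+(w^m)$. (The paper instead does a direct length computation showing $\ell(w^m s_{\beta_i}) > \ell(w^m)$.) In your part~(ii) the shift formula is also off --- $(w^m x)(w^\ell\beta_i) = x(w^{\ell-m}\beta_i) = (w^{m-\ell}x)(\beta_i)$, not $(w^{m+\ell}x)(\beta_i)$, and the set $\{m : w^m x(\beta_i) > 0\}$ is $\mathbb{Z}_{\leq 0}$, hence not finite --- but the conclusion $\ell = 0$ still follows once these are corrected. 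The paper handles this step by simply asserting it is clear.
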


\begin{proof}
Since $w$ is straight, it follows that $\Phi^+(w^{-1}) \subseteq \Phi^+(w^{-m})$ for every $m \geq 1$. We will show that $\Phi^+(w^{-1}) \subseteq \Phi^+ \setminus \Phi^+(w^{m})$ for every $m \geq 1$. Let $w=s_{j_1} \cdots s_{j_k}$ be a reduced expression and let $\beta_{i}$ be as above. Then  
\begin{align*}
w^m s_{\beta_i}=(\underbrace{s_{j_1} \cdots s_{j_k} \cdots s_{j_1} \cdots s_{j_k}) (s_{j_1} \cdots s_{j_{i-1}} s_{j_i}}_{=:w'} s_{j_{i-1}} \cdots s_{j_1}).
\end{align*}
Since $w$ is straight the subword $w'$ is reduced. Hence $\ell(w')=mk + i$. Multiplying $w'$ with any $s \in S$ decreases the length by at most 1. It follows that 
\[\ell(w^m s_{\beta_i}) \geq mk+i - (i-1) > mk = \ell(w^m). \]
So we have $w^m \beta_i >0$ by \cite[Chapter II, Proposition 5.7]{Hu90}. Thus, $\Phi^+(w^{-1}) \subseteq \Phi^+ \setminus \Phi^+(w^{m})$.

Let $x \in C \cap U^\circ$. We observe that for every $\alpha \in \Phi$, $x(\alpha) > 0$ if and only if $\alpha >0$. If $m \geq 1$ by the above we therefore have
\begin{align*}
m(w^mx(\beta_i))& \stackrel{(\ref{equ:DualAction})}{=} m (\underbrace{x(w^{-m}\beta_i)}_{<0}) < 0, \text{ and} \\
-m (w^{-m}x(\beta_i)) & \stackrel{(\ref{equ:DualAction})}{=} -m (\underbrace{x(w^{m}\beta_i)}_{>0}) <0.
\end{align*}

Hence $\Phi^+(w^{-1}) \subseteq \operatorname{Out}(w)$ and it is clear that each orbit in $\langle w \rangle \setminus \operatorname{Out}(w)$ contains at most one element of $\Phi^+(w^{-1})$. By Proposition \ref{outward straight}, $\Phi^+(w^{-1})$ is therefore a set of representatives for $\langle w \rangle \setminus \operatorname{Out}(w)$.
\end{proof}

\section{The proof} \label{sec:Proof}
Throughout this section we assume that $(W,S)$ is an irreducible, infinite Coxeter system of rank $n$ and $S=\{s_1, \ldots , s_n\}$. \\

As stated in the last section, our proof of Theorem \ref{thm:main} relies on a detailed study of the action of $C_W(c)$ on $\operatorname{Out}(c)$. 
Furthermore, (for the Coxeter groups that are not affine) we will use the following result which is an immediate consequence of \cite[Corollary 6.3.10]{Daan1}. Note that every Coxeter element $c$ of $W$ is essential by Proposition \ref{prop:QuasiIsEssential}.

\begin{Proposition} \label{prop:ElInCenAreE}
Let $w \in W$ be an essential element. Then each element in $C_W(w)$ has either finite order or is essential.
\end{Proposition}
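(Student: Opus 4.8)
The plan is to derive the statement from Krammer's result \cite[Corollary 6.3.10]{Daan1}, which (as indicated) says that the parabolic closure of the cyclic group generated by a straight essential element of infinite order is all of $W$, and more generally controls the parabolic closures of elements in its centralizer. Concretely, I would proceed as follows. Let $w \in W$ be essential and let $g \in C_W(w)$; suppose $g$ has infinite order, and aim to show $g$ is essential, i.e. $\pac(\{g\}) = W$.

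First I would observe that since $g$ commutes with $w$, the parabolic subgroup $P := \pac(\{g\})$ is normalized by $w$: conjugation by $w$ is an automorphism of $W$ fixing $g$, hence it sends the smallest parabolic containing $g$ to itself, so $w P w^{-1} = P$. Next, I would pass to the parabolic closure $Q := \pac(\{w, g\}) = \pac(\{w\} \cup P)$, which contains both $w$ and $P$ and is itself a Coxeter system on which everything relevant restricts (reflections of $Q$ are $T \cap Q$, etc.). The key point is that inside $Q$ the element $w$ is still essential (it is not contained in any proper parabolic of $Q$, since a proper parabolic of $Q$ is a proper parabolic of $W$), hence by Proposition~\ref{prop:EssentialInfinite} applied inside $Q$ — or directly, since $w$ essential of infinite order — and by the fact that $Q$ is finitely generated, $Q$ is infinite.

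Now the heart of the argument is Krammer's description of centralizers/normalizers of parabolic subgroups under a straight essential element. I would invoke \cite[Corollary 6.3.10]{Daan1} in the following form: if $w$ is essential (hence, working in the irreducible component of $Q$ containing the support of $w$, straight by Speyer's Theorem~\ref{thm:Speyer} after a suitable reduction, or using that Coxeter elements are the relevant case), then any $g \in C_W(w)$ satisfies: either $g$ lies in the "finite part" (has finite order), or $g$ together with $w$ generates an essential subgroup, forcing $\pac(\{g\}) \supseteq \pac(\{w\})$-type conclusion, i.e. $\pac(\{g\}) = W$. The clean way to phrase it: $C_W(w)$ acts on $\mathrm{Out}(w)$ with finitely many $\langle w\rangle$-orbits (Proposition~\ref{outward straight}), and an element of $C_W(w)$ fixing "enough structure" must be essential unless it has finite order; Corollary 6.3.10 of Krammer packages exactly this dichotomy.

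The main obstacle I anticipate is bridging the gap between the precise hypotheses of \cite[Corollary 6.3.10]{Daan1} and the general "essential element" hypothesis here — in particular whether Krammer's statement is phrased for Coxeter elements, for straight elements, or for general essential elements, and whether one needs $w$ straight (which for essential elements one can often arrange by replacing $w$ with a power via Proposition~\ref{prop:PowersEssential}, noting that $C_W(w) \subseteq C_W(w^p)$ is generally false but $C_W(w) = C_W(w^p)$ can fail, so care is needed). If Krammer's corollary is stated only for Coxeter elements, one would instead argue: an essential $w$ of infinite order has $\pac(\{w\}) = W$; its centralizer acts on $\mathrm{Out}(w)$; any $g \in C_W(w)$ of infinite order cannot fix pointwise a sub-root-system spanning a proper parabolic, because that parabolic would have to be $w$-stable and contain $g$, contradicting essentiality of $w$ via the structure of $w$-invariant parabolics (which, for essential $w$, are only $\{1\}$ and $W$). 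That last clause — classifying $w$-invariant parabolic subgroups when $w$ is essential — is the real content, and it is precisely what \cite[Corollary 6.3.10]{Daan1} supplies; so the write-up reduces to carefully quoting it. I would therefore present the proof as a short citation-driven argument, flagging the reduction to the straight case and the identification of $w$-invariant parabolics as the two points needing the cited corollary.
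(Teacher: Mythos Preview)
Your proposal misidentifies the content of \cite[Corollary 6.3.10]{Daan1}. As the paper uses it, that corollary does not classify $w$-invariant parabolic subgroups or control parabolic closures of centralizing elements; it says (for $(W,S)$ infinite, irreducible, and not affine) that the index $[C_W(w):\langle w\rangle]$ is finite whenever $w$ is essential. With this input the argument is immediate and much shorter than what you sketch: if $v\in C_W(w)$ has infinite order and $k=[C_W(w):\langle w\rangle]$, then $v^k\in\langle w\rangle$ is nontrivial, so $v^k=w^m$ for some $m\neq 0$; now $w^m$ is essential by Proposition~\ref{prop:PowersEssential}, hence $v^k$ is essential, hence $v$ is essential by Proposition~\ref{prop:PowersEssential} again. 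The affine case is handled separately and trivially: every proper parabolic subgroup is finite, so any element of infinite order is automatically essential.

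Your alternative route via $w$-invariant parabolics is not obviously salvageable without further input. Knowing that $w$ normalizes $P=\pac(\{g\})$ does not by itself contradict essentiality of $w$: being essential means $w$ lies in no proper parabolic, not that $w$ normalizes no proper parabolic. Corollary~\ref{cor:normalizer} gives $N_W(W_I)=W_I$ only for \emph{maximal} proper standard parabolics, so you would still need to rule out $w$ normalizing a smaller one. Also, a small slip: you write that $C_W(w)\subseteq C_W(w^p)$ ``is generally false'', but in fact it is always true (anything commuting with $w$ commutes with $w^p$); it is the reverse inclusion that can fail.
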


\begin{proof}
By Proposition \ref{prop:EssentialInfinite} an element of finite order can not be essential. The assertion is obviously true for affine Coxeter systems since each proper parabolic subgroup is finite in these cases. Therefore let $(W,S)$ be not affine. By \cite[Corollary 6.3.10]{Daan1}, the index of $\langle w \rangle$ in $C_W(w)$ is finite. Let $k$ be that index and let $v \in C_W(c)$ be of infinite order. Then $1 \neq v^k \in \langle w \rangle$. In particular, there exists $m \in \ZZ$ such that $v^k = w^m$. Since $w$ is essential, $w^m$ is essential by Proposition \ref{prop:PowersEssential}. That is, $v^k$ is essential. By applying Proposition \ref{prop:PowersEssential} again it then follows that $v$ is essential. 
\end{proof}

\begin{proof}[\textbf{Proof of Theorem \ref{thm:main}}]
Since $C_W(wcw^{-1})=wC_W(c)w^{-1}$ for every $w \in W$, we can assume our Coxeter element to be standard. After possible relabeling of our set $S$ we can therefore assume that $c=s_1 \cdots s_n$. Let $\beta_i=s_1 \cdots s_{i-1}(e_i)$ (with $\beta_1=e_{1}$) be as in Section 2.5. Note that 
\begin{align} \label{equ:Beta1}
s_{\beta_i}=s_1 \cdots s_{i-1} s_i s_{i-1} \cdots s_1 = c(s_n s_{n-1} \cdots s_{i+1} s_{i-1} \cdots s_1).
\end{align}
Let $g \in C_W(c)$. By Proposition \ref{prop:Representatives} and the fact that $C_W(c)$ acts on $\operatorname{Out}(c)$ there exist $m_i \in \Z$ and $j \in \{1, \dots, n\}$ such that $g \beta_i =c^{m_i} \beta_j$.
In other words
\begin{align} \label{equ:Beta2}
(gc^{-m_i})s_{\beta_i}(gc^{-m_i})^{-1} = s_{gc^{-m_i} (\beta_i)} = s_{\beta_j}.
\end{align}
We set $h_i:=gc^{-m_i}$. Clearly, $h_i \in C_W(c)$.
It follows from above that
\begin{align*}
h_i s_{\beta_i} h_i^{-1} & \stackrel{(\ref{equ:Beta1})}{=} h_i c (s_n s_{n-1} \dots s_{i+1} s_{i-1} \dots s_1) h_i^{-1} \\
& \stackrel{\phantom{(\ref{equ:Beta1})}}{=} c \left(  h_i  (s_n s_{n-1} \dots s_{i+1} s_{i-1} \dots s_1) h_i^{-1} \right)\\
& \stackrel{(\ref{equ:Beta2})}{=} c (s_n s_{n-1} \dots s_{j+1} s_{j-1} \dots s_1).
\end{align*}
Thus,
\[ h_i  (s_n s_{n-1} \dots s_{i+1} s_{i-1} \dots s_1) h_i^{-1}= s_n^{h_i} s_{n-1}^{h_i} \dots s_{i+1}^{h_i} s_{i-1}^{h_i} \dots s_1^{h_i}=s_n s_{n-1} \dots s_{j+1} s_{j-1} \dots s_1.  \]

Since $s_n s_{n-1} \dots s_{j+1} s_{j-1} \dots s_1$ is a (standard) parabolic Coxeter element we have by \cite[Lemma 2.1]{BDSW14} that
$$
n-1=\ell_T(s_n s_{n-1} \dots s_{j+1} s_{j-1} \dots s_1)=\ell(s_n s_{n-1} \dots s_{j+1} s_{j-1} \dots s_1).
$$
Therefore, $(s_n^{h_i}, s_{n-1}^{h_i}, \ldots , s_{i+1}^{h_i}, s_{i-1}^{h_i}, \ldots , s_1^{h_i} ) \in \Red_T(s_n s_{n-1} \cdots s_{j+1} s_{j-1} \cdots s_1)$. Let $I= \{1, \dots, n\} \setminus \{i\}$ and let $J=\{1, \dots, n\} \setminus \{j\}$. By Theorem \ref{thm:generation} it follows that 
\[h_iW_I h_i^{-1}=W_J. \]
Hence, by Lemma \ref{lem:ConjMaxParabolic} $I=J$, that is $h_i \in N_W(W_I)$.  However, $N_W(W_I)=W_I$ by Corollary \ref{cor:normalizer}. At this stage of the proof, we need to distinguish between affine and non-affine Coxeter groups.

First, suppose that $W$ is not affine. Then, since $h_i \in W_I$,  $h_i$ is not essential. So $h_i$ has finite order by Proposition \ref{prop:ElInCenAreE}. In particular, if $k$ denotes the index of $\langle c \rangle$ in $C_W(c)$ (which is finite by \cite[Corollary 6.3.10]{Daan1}) then $h_i^k=1$. Hence, for every $i \in \{1, \dots ,n\}$ we have
\[g^k=c^{m_ik}.\]
Since the order of $c$ is infinite it follows that all the $m_i$ are the same and we may set $m:=m_1$. Thus all the $h_i$ are the same and we may set $h:=h_1$. Note that 
\[\bigcap_{\substack{I \subseteq S \\ |I|=n-1}}I= \emptyset.\] 
In conclusion (for the first equality see \cite[Theorem 5.5]{Hu90}),

\[h \in \bigcap_{\substack{I \subseteq S \\ |I|=n-1}} W_I=W_\emptyset=\{1\}. \]
Thus $g=c^m$.

Now, suppose that $(W,S)$ is affine. In this case $W_I$ is a finite group for every $I \subseteq S$ with $|I|=n-1$. Let $l=\operatorname{lcm}(|W_I|  \mid I \subseteq S,~|I|=n-1 )$. Then $h_i^l=1$, that is $g^l=c^{m_i l}$ for every $i \in \{1, \dots ,n\}$.  Hence, as in the non-affine case, all the $m_i$ are the same. Let $m:=m_1$. Then, as before we conclude that $g=c^m$.
\end{proof}

\medskip
However, it is not the case that $C_W(w)= \langle w \rangle$ for an arbitrary essential element $w \in W$. The centralizer of an essential might not even be cyclic. In contrast to Coxeter elements, the centralizer of an arbitrary essential element can contain elements of finite order different from the identity (compare Proposition \ref{prop:ElInCenAreE}). We illustrate this in the following example.

\begin{example}
Consider a Coxeter system $(W,S)$ of type $\widetilde{D}_4$, that is, the Coxeter graph $\Gamma(W,S)$ is given as follows.
\begin{figure}[H]
\centering
\begin{tikzpicture}[scale=0.75, thick,>=latex]

  \node (1) at (2,-0.2) [circle, draw, inner sep=0pt, minimum width=4pt]{};
  \node (1b) at (1.6,-0.2) []{$s_1$};
  \node (3) at (3,-1) [circle, draw, inner sep=0pt, minimum width=4pt]{};
  \node (3b) at (3,-0.55) []{$s_3$};
  \node (2) at (2,-1.8) [circle, draw, inner sep=0pt, minimum width=4pt]{};
  \node (2b) at (1.6,-1.8) []{$s_2$};
  \node (4) at (4,-0.2) [circle, draw, inner sep=0pt, minimum width=4pt]{};
  \node (4b) at (4.4,-0.2) []{$s_4$};
  \node (5) at (4,-1.8) [circle, draw, inner sep=0pt, minimum width=4pt]{};
  \node (5b) at (4.4,-1.8) []{$s_5$};

  \draw[-] (1) to (3);
  \draw[-] (2) to (3);
  \draw[-] (3) to (4);
  \draw[-] (3) to (5);

%
\end{tikzpicture}
\end{figure}

The element $v=s_4s_3s_4s_5s_3s_2$ is a quasi-Coxeter element (but not a Coxeter element) in the spherical parabolic subgroup $W' =\langle s_2, s_3, s_4, s_5 \rangle$. Direct calculations yield $s_5s_4 \in W' \setminus \langle v \rangle$ as well as $s_5s_4 \in C_{W'}(v)$. The element $w= v s_1 \in W$ is quasi-Coxeter, hence essential by Proposition \ref{prop:QuasiIsEssential}. In particular, since $s_1$ commutes with $s_4$ and $s_5$, we have $s_5s_4 \in C_W(w)$. But $s_5s_4$ is not essential, hence $s_5s_4 \notin \langle w \rangle$ by Proposition \ref{prop:PowersEssential}.
\end{example}

\medskip

\section{Outlook: Artin groups} \label{sec:Artin}
Let $(W,S)$ be a Coxeter system with $S = \{s_1, \ldots , s_n \}$. The \defn{Artin group} associated to $(W,S)$ is the group given by the presentation
$$
A(W,S) =  \langle ~\bm{s_1}, \ldots , \bm{s_n} \mid \underbrace{\bm{ s_is_js_i} \cdots }_{m_{ij}~\text{terms}} = \underbrace{\bm{s_js_is_j} \cdots }_{m_{ij}~\text{terms}} ~\text{for all }i \neq j ~\rangle.
$$
Although closely related to Coxeter groups, these groups are rather mysterious and not much is known about them in general. 

Similar to our study of the centralizer of Coxeter elements in Coxeter groups we can consider the element $\bm{c} = \bm{s_1s_2} \cdots \bm{s_n} \in A(W,S)$ and try to determine its centralizer in $A(W,S)$. Clearly the center of $A(W,S)$ is contained in every centralizer in $A(W,S)$. Therefore we first might want to know what the center of $A(W,S)$ looks like. In fact, it is trivial in most cases if $(W,S)$ is infinite and irreducible \cite{Charney}.
If $(W,S)$ is irreducible and finite, the center of $A(W,S)$ is infinite cyclic and either generated by the element $(\bm{s_1s_2} \cdots \bm{ s_n})^{h}$ or the element $(\bm{s_1s_2} \cdots \bm{s_n})^{h/2}$ \cite[Satz 7.2]{Brieskorn}, where $h$ denotes the Coxeter number.

We have a natural homomorphism
$$
p: A(W,S) \rightarrow W, ~\bm{s_i} \mapsto s_i ~(1  \leq i \leq n).
$$
Given a reduced expression $w=s_{i_1} \dots s_{i_k} \in W$, we call $\bm{ w}=\bm{s_{i_1}} \cdots \bm{s_{i_k}} \in A(W,S)$ the \defn{lift} of this expression. In particular, we have $p(\bm{w})=w$ and the element $\bm{ s_1s_2} \cdots \bm{s_n} \in A(W,S)$ is the lift of the Coxeter element $c=s_1 s_2 \cdots s_n$ in $W$.

As an easy consequence of Theorem \ref{thm:main} we obtain the following.
\begin{corollary} \label{cor:main}
Let $(W,S)$ be an irreducible Coxeter system. If $\bm{c} = \bm{s_1s_2} \cdots \bm{s_n} \in A(W,S)$ is the lift of the standard Coxeter element $c = s_1 s_2 \cdots s_n \in W$, then $p(C_{A(W,S)}(\bm{c}))  \subseteq \langle c \rangle$. Furthermore, if $(W,S)$ is infinite and $C_{A(W,S)}(\bm{c})$ is cyclic, then $C_{A(W,S)}(\bm{c}) = \langle \bm{c} \rangle$.
\end{corollary}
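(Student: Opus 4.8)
The plan is to transfer the problem to $W$ via the natural surjection $p \colon A(W,S) \to W$, $\bm{s_i} \mapsto s_i$. For the first assertion, if $\bm{g} \in C_{A(W,S)}(\bm{c})$, then applying the homomorphism $p$ to $\bm{g}\bm{c} = \bm{c}\bm{g}$ and using $p(\bm{c}) = c$ gives $p(\bm{g})\, c = c\, p(\bm{g})$, i.e. $p(\bm{g}) \in C_W(c)$. It then remains to quote the equality $C_W(c) = \langle c \rangle$: for $(W,S)$ infinite and irreducible this is Theorem \ref{thm:main}, and for $(W,S)$ finite and irreducible it is the classical result of Carter \cite[Proposition 30]{Car72} recalled in the introduction. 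Hence $p(C_{A(W,S)}(\bm{c})) \subseteq \langle c \rangle$.

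For the second assertion, assume in addition that $(W,S)$ is infinite and that $C_{A(W,S)}(\bm{c})$ is cyclic. The inclusion $\langle \bm{c} \rangle \subseteq C_{A(W,S)}(\bm{c})$ is immediate, so it suffices to prove the reverse one. Let $\bm{d}$ be a generator of the cyclic group $C_{A(W,S)}(\bm{c})$; then $\bm{c} = \bm{d}^{k}$ for some $k \in \ZZ$, and by the first assertion $p(\bm{d}) = c^{m}$ for some $m \in \ZZ$. Applying $p$ to $\bm{c} = \bm{d}^{k}$ yields $c = c^{mk}$. Since $(W,S)$ is irreducible and infinite, the Coxeter element $c$ is essential by Proposition \ref{prop:QuasiIsEssential}, hence of infinite order by Proposition \ref{prop:EssentialInfinite}; therefore $mk = 1$, so $k = \pm 1$ and $C_{A(W,S)}(\bm{c}) = \langle \bm{d} \rangle = \langle \bm{c} \rangle$.

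There is essentially no obstacle in this argument itself: the corollary is a formal consequence of Theorem \ref{thm:main} (together with Carter's theorem in the finite case) and the elementary properties of $p$. The genuine difficulty --- which forces the conditional phrasing of the second part --- is the cyclicity hypothesis. Removing it would require controlling $C_{A(W,S)}(\bm{c}) \cap \ker p$, that is, showing that no non-trivial element of the pure Artin group commutes with $\bm{c}$; granting that, $p$ would embed $C_{A(W,S)}(\bm{c})$ into the infinite cyclic group $\langle c \rangle$, so the centralizer would automatically be cyclic and the computation above would conclude. Proving this kernel statement for arbitrary irreducible Artin groups appears to be beyond the reach of the methods used here.
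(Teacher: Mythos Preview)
Your proof is correct and follows essentially the same approach as the paper's own proof: reduce via $p$ to $C_W(c)=\langle c\rangle$, then for the second part pick a generator of the cyclic centralizer, express $\bm{c}$ as a power of it, apply $p$, and use that $c$ has infinite order to force the exponent to be $\pm 1$. You are in fact a bit more careful than the paper in two places: you note that the finite irreducible case requires Carter's result (the paper's proof only cites Theorem~\ref{thm:main}, which is stated for infinite $W$), and you justify explicitly why $c$ has infinite order via Propositions~\ref{prop:QuasiIsEssential} and~\ref{prop:EssentialInfinite}.
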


\begin{proof}
The first assertion follows directly from Theorem \ref{thm:main}. Therefore let $(W,S)$ be infinite and assume $C_{A(W,S)}(\bm{c})$ to be cyclic with generator $x$. We have to show that $x = \bm{c}^{\pm 1}$. Since $\bm{c} \in C_{A(W,S)}(\bm{c})$, there exists $k \in \ZZ$ such that $x^k = \bm{c}$. By the first assertion we have $p(x) = c^m$ for some $m \in \ZZ$. Hence we obtain
$$
c^{mk} = (c^m)^k = p(x)^k = p(x^k) =p(\bm{c}) = c,
$$
that is, $mk =1$ as $c \in W$ has infinite order. Since $m,k \in \ZZ$, it follows that $m=k= \pm 1$. Thus, $x = \bm{c}^{\pm 1}$ as desired
\end{proof}

As a consequence of the work of Bessis \cite{Bessis2} as well as McCammond--Sulway \cite{McCammond} we obtain the follwing.

\begin{corollary}
If $(W,S)$ is an irreducible Coxeter system which is finite or affine, then $C_{A(W,S)}(\bm{c}) = \langle \bm{c} \rangle$.
\end{corollary}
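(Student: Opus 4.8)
The plan is to treat the finite and the affine case separately, appealing to Bessis's work in the former and to McCammond--Sulway's (packaged through Corollary~\ref{cor:main}) in the latter. In both, the point is to control, via the projection $p\colon A(W,S)\to W$ with $p(\bm{c})=c$, which elements of $P:=\ker p$ commute with $\bm{c}$. Since $p$ is a homomorphism, $p\bigl(C_{A(W,S)}(\bm{c})\bigr)\subseteq C_W(c)$, and $C_W(c)=\langle c\rangle$ — by Carter \cite{Car72} if $W$ is finite, and by Theorem~\ref{thm:main} if $W$ is infinite. Hence, writing $p(g)=c^m$ for $g\in C_{A(W,S)}(\bm{c})$, the element $\bm{c}^{-m}g$ lies in $P$ and commutes with $\bm{c}$, so $C_{A(W,S)}(\bm{c})=\langle\bm{c}\rangle\cdot C_P(\bm{c})$. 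Thus the corollary is equivalent to the inclusion $C_P(\bm{c})\subseteq\langle\bm{c}\rangle$, i.e.\ to $C_P(\bm{c})\subseteq\langle\bm{c}\rangle\cap P$.

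\emph{Affine case.} An irreducible affine Coxeter system is infinite, so $c$ has infinite order and $\langle\bm{c}\rangle\cap P=\{1\}$; by the previous paragraph the claim thus amounts to $C_P(\bm{c})=\{1\}$, equivalently — since Corollary~\ref{cor:main} reduces the claim to the cyclicity of $C_{A(W,S)}(\bm{c})$ — to the cyclicity of $C_{A(W,S)}(\bm{c})$. This is where McCammond--Sulway \cite{McCammond} enter. They study the Euclidean Artin group $A(W,S)$ through a Garside-type (dual) structure associated with the interval $[1,c]$ in the absolute order, with $\bm{c}$ in the role of the Garside element, and they prove in particular that $A(W,S)$ is torsion-free. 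Since $c$ is essential in $W$, the element $\bm{c}$ cannot be conjugated into a standard parabolic subgroup of $A(W,S)$ (a conjugate lying in $\langle\bm{s_i}\mid i\in I\rangle$ would project to a conjugate of $c$ inside the proper parabolic $W_I$); an analysis inside their Garside framework parallel to Krammer's \cite[Corollary~6.3.10]{Daan1} should then yield that $\langle\bm{c}\rangle$ has finite index in $C_{A(W,S)}(\bm{c})$. A torsion-free group with an infinite cyclic subgroup of finite index is infinite cyclic, so $C_{A(W,S)}(\bm{c})$ is cyclic and Corollary~\ref{cor:main} gives $C_{A(W,S)}(\bm{c})=\langle\bm{c}\rangle$.

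\emph{Finite case.} Corollary~\ref{cor:main} does not apply, and one works directly in the generalized braid group $A(W,S)=B(W)$. The element $\bm{c}$ is the lift of the Coxeter element: an $h$-th root of the full twist $\Delta^2=\bm{c}^h$ attached to a regular element of order $h$ in the sense of Springer. By the Springer theory for braid groups developed by Bessis \cite{Bessis2}, the centralizer in $B(W)$ of such an element is isomorphic to the braid group of the reflection group $C_W(c)$; as $C_W(c)=\langle c\rangle$ is cyclic of order $h$ (Carter), this braid group is infinite cyclic, generated by the image of $\bm{c}$. Hence $C_{A(W,S)}(\bm{c})=\langle\bm{c}\rangle$. (This is consistent with the first paragraph: $C_P(\bm{c})$ is then the pure part of $\langle\bm{c}\rangle$, namely $\langle\bm{c}^h\rangle=\langle\Delta^2\rangle\subseteq\langle\bm{c}\rangle$.)

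I expect the affine case to be the main obstacle. McCammond--Sulway do not record a centralizer result in the form required here, so the finite-index statement $[C_{A(W,S)}(\bm{c}):\langle\bm{c}\rangle]<\infty$ has to be extracted from the Garside-theoretic structure they build — for instance from the behaviour of $\bm{c}$ under cyclic sliding and its summit sets, or from a comparison with a centralizer computed in the ambient Garside group into which $A(W,S)$ maps. The finite case is close to a direct citation of Bessis; the only care needed there is to match normalizations so that the relevant regular element is exactly the lift $\bm{c}$ of the Coxeter element.
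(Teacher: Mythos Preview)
Your treatment of the finite case is essentially the paper's argument: both invoke Bessis's identification of $C_{A(W,S)}(\bm{c})$ with the braid group attached to the $\xi_h$-regular element $c$, observe that the relevant reflection group $C_W(c)=\langle c\rangle$ acts on a one-dimensional eigenspace (Springer), and deduce that the centralizer is infinite cyclic. The paper phrases the last step via $\pi_1(\langle c\rangle\backslash V'_{\mathrm{reg}})\cong\ZZ$ and then appeals to Corollary~\ref{cor:main}; your formulation in terms of the braid group of $C_W(c)$ is the same content.

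The affine case, however, is not a proof but a plan with a declared gap. Your route is: (i) $[C_{A(W,S)}(\bm{c}):\langle\bm{c}\rangle]<\infty$, (ii) $A(W,S)$ torsion-free, hence (iii) $C_{A(W,S)}(\bm{c})$ cyclic, then Corollary~\ref{cor:main}. Step~(ii) is indeed in McCammond--Sulway, and (iii) is a correct consequence of (i)+(ii). But you do not establish~(i): the phrases ``should then yield'' and ``has to be extracted'' are exactly where the argument is missing. The analogy with \cite[Corollary~6.3.10]{Daan1} is not automatic --- Krammer's finite-index statement uses the geometry of the Tits cone (axes of essential elements), which has no direct Garside counterpart; and standard Garside tools (cyclic sliding, summit sets) control conjugacy algorithms, not the index of $\langle\bm{c}\rangle$ in its centralizer. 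The paper sidesteps all of this: it does not prove finite index at all, but simply notes that the equality $C_{A(W,S)}(\bm{c})=\langle\bm{c}\rangle$ itself is contained in the proof of \cite[Proposition~11.9]{McCammond}. So the correct fix is not to manufacture a finite-index argument, but to locate the centralizer computation already carried out there.

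The preliminary reduction to $C_P(\bm{c})\subseteq\langle\bm{c}\rangle\cap P$ is correct but unused in what follows; you may as well drop it.
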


\begin{proof}
For the case that $(W,S)$ is affine, the assertion is already proved implicitly in the proof of \cite[Proposition 11.9]{McCammond}. Now suppose that $(W,S)$ is finite and let $V$ be the complexification of its natural representation. For $n \in \mathbb{N}$, we let $\xi_n$ denote a primitive $n$-th root of unity. Then $p(\bm{c}) = c$ is a $\xi_h$-regular element (for instance, see \cite[Definiton 1.8]{Bessis2} for the definition of $\xi_h$-regular elements) where $h$ is the Coxeter number of $(W,S)$. Let $V' : =\ker(c-\xi_h)$ and interpret $\langle c \rangle$ as a complex reflection group acting on $V'$. Further, let $V'_{\text{reg}}$ be the associated hyperplane complement. By \cite[Theorem 12.4]{Bessis2}, $C_{A(W,S)}(\bm{c}) \cong \pi_1(\langle c \rangle \setminus V'_{\text{reg}})$. By \cite[Theorem 4.2]{Springer}, $V'$ is of dimension one and thus $V'_{\text{reg}}$ is homeomorphic to $\mathbb{R}^2 \setminus \{0\}$. Hence, $\pi_1(\langle c \rangle \setminus V'_{\text{reg}}) \cong \mathbb{Z}$. The assertion follows by Corollary \ref{cor:main}.
\end{proof}

In view of the previous statements we want to pose the following question:

\begin{question}
Does the conclusion of Theorem \ref{thm:main} hold for Artin groups? More precisely, if $(W,S)$ is an irreducible Coxeter system and $\bm{c}= \bm{ s_1s_2} \cdots \bm{s_n} \in A(W,S)$ is the lift of a Coxeter element in $W$, is it true that $C_{A(W,S)}(\bm{c}) = \langle \bm{c} \rangle$?
\end{question}

\nocite{*}

\bibliography{mybibCentralizer}
\bibliographystyle{amsplain}

\end{document}